 \newtheorem{thm}{Theorem}[section]
  \newtheorem{cor}[thm]{Corollary}
 \newtheorem{lem}[thm]{Lemma}
 \newtheorem{prop}[thm]{Proposition}
    \theoremstyle{definition}
 \newtheorem{defn}[thm]{Definition}
 \newtheorem{defns}[thm]{Definitions}
  \newtheorem{notn}[thm]{Notation}
  \newtheorem{example}[thm]{Example}
  \newtheorem{examples}[thm]{Examples}
 \theoremstyle{remark}
 \newtheorem{rem}[thm]{Remark}
\numberwithin{equation}{section}
\DeclareMathOperator{\Pspec}{P.Spec}
\DeclareMathOperator{\trdeg}{tr.deg}
\DeclareMathOperator{\ham}{ham}
\DeclareMathOperator{\hgt}{ht}
\DeclareMathOperator{\Phgt}{Pht}
\DeclareMathOperator{\Jac}{Jac}
\DeclareMathOperator{\Pz}{PZ}
\DeclareMathOperator{\sgn}{sgn}
\newcommand{\C}{{\mathbb C}}
\begin{document}

\title[ ]{Poisson spectra in polynomial algebras}

\author{David A. Jordan}
\author{Sei-Qwon Oh}

\address{School of Mathematics and Statistics, University of Sheffield, Hicks Building, Sheffield S3 7RH, UK} \email{d.a.jordan@sheffield.ac.uk}

\address{Department of Mathematics, Chungnam National  University, 99 Daehak-ro,   Yuseong-gu, Daejeon 305-764, Korea}
 \email{sqoh@cnu.ac.kr}

\thanks{The second author is supported by National  Research Foundation of Korea, Grant 2009-0071102, and thanks the Korea Institute for Advanced Study for the warm hospitality during a part of the preparation of this paper.}

\subjclass[2010]{17B63, 16S36}

\keywords{Poisson algebra, Poisson prime ideal, polynomial algebra}

\date{}


\begin{abstract}
A significant class of Poisson brackets on the polynomial algebra $\C[x_1,x_2,\dots, x_n]$ is studied and, for this class of Poisson brackets,
the Poisson prime ideals and Poisson primitive ideals are determined. Moreover it is established that these Poisson algebras satisfy the Poisson Dixmier-Moeglin equivalence.
 \end{abstract}

\maketitle

In \cite{JoOh}, the authors analyzed Poisson brackets on the polynomial algebra $\C[x,y,z]$ in three indeterminates $x,y,z$, including a class of Poisson brackets determined by Jacobians. In particular, for an arbitrary rational function $s/t\in\C(x,y,z)$, they analyzed the prime and primitive Poisson ideals for the Poisson bracket such that, for $f,g\in \C[x,y,z]$,
   \begin{equation}
   \{f,g\}=t^2\Jac(f,g,s/t),\label{qm}
   \end{equation} where $\Jac$ denotes the Jacobian determinant.

The main purpose of this paper is to generalize the results in \cite{JoOh} to the general polynomial algebra $A:=\C[x_1, x_2,\dots, x_n]$, $n\geq3$, equipped with a Poisson bracket which is determined by $n-2$ rational functions and which generalizes \eqref{qm}. As in \cite{JoOh}, the results will be illustrated using particular examples.

Fix $s_1,t_1,\dots , s_{n-2},t_{n-2}\in A$ such that $s_i$ and $t_i\neq0$ are coprime for each $i=1,2,\dots, n-2$. In Section 1 it is shown that there is a Poisson bracket on the quotient field $B$ of $A$ such that, for all $f,g\in B$,
 \[\{f,g\}=(t_1\dots t_{n-2})^2\Jac(f,g,s_1/t_1,s_2/t_2,\dots,s_{n-2}/t_{n-2}).\]
The purpose of the factor $(t_1\dots t_{n-2})^2$ is to ensure that this restricts to a Poisson bracket on $A$.

The Poisson prime ideals of $A$ for the above bracket are determined in Section 2 where Definition~\ref{resnullproper} uses the terminology \emph{residually null}, respectively \emph{proper}, for Poisson prime ideals $P$ where the induced Poisson bracket on $A/P$ is zero, respectively non-zero.
 The residually null Poisson prime ideals of $A$ form a Zariski closed set of the prime spectrum of $A$ and can often be found explicitly using elementary commutative algebra. We shall determine the proper Poisson prime ideals of $A$  in terms of a finite set of localizations $A_\gamma$ of $A$, each of which has a subalgebra $C_\gamma$
that is a polynomial ring in $n-2$ variables and is contained in the Poisson centre of $A_\gamma$. As the Poisson bracket on $C_\gamma$ is trivial, any prime ideal $Q$ of $C_\gamma$ is Poisson. Although $QA_\gamma$ need not be prime, it is a Poisson ideal and the finitely many minimal prime ideals of $A_\gamma$ over $QA_\gamma$ are Poisson prime ideals of $A_\gamma$. Taking the intersection of each of these with $A$, we obtain finitely many Poisson prime ideals of $A$. The main result is that every proper Poisson prime ideal $P$ of $A_\gamma$ occurs in this way with $Q=PA_\gamma\cap C_\gamma$. The passage between Poisson prime ideals of $A_\gamma$ and those of $A$ can then be handled by standard localization techniques. This will be illustrated using examples with $n=4$ in which case the algebras $C_\gamma$ are polynomial algebras in two indeterminates. The main example is the Poisson bracket associated with $2\times 2$ quantum matrices with which the reader may be familiar.
We also consider actions on $A$, as Poisson automorphisms, of subgroups of the multiplicative group $(\C^*)^n$.

In Section 3, we determine the Poisson primitive ideals of $A$ and show that $A$ satisfies the Poisson Dixmier-Moeglin equivalence discussed in \cite[2.4]{Oh4} and \cite{Good3}. Here, as indeed is the case with the Poisson prime ideals, the varieties determined by  $n-2$ polynomials of the form $\lambda_is_i-\mu_it_i$, $i=1,2,\dots,n-2$, where $(\lambda_i,\mu_i)\in\C^2\backslash\{(0,0)\}$ for all $i$, play an important role.

\section{Poisson brackets}\label{Pbrackets}
\begin{defn}
A \emph{Poisson algebra} is $\C$-algebra $A$ with a Poisson bracket, that is a bilinear product
$\{-,-\}:A\times A\rightarrow A$ such that $A$ is a Lie algebra
under $\{-,-\}$ and, for all $a\in A$, the \emph{hamiltonian} $\ham(a):=\{a,-\}$ is a
$\C$-derivation of $A$.
\end{defn}

\begin{notn}
Let $A$ denote the polynomial algebra $\C[x_1,\dots,x_n]$ in $n$ indeterminates and let $B$ denote the quotient field $\C(x_1,\dots,x_n)$ of $A$.
For $1\leq i\leq n$, let $\partial_i$ be the derivation $\frac{\partial}{\partial x_i}$ of $B$.
For $b_1,b_2,\dots,b_n\in B$, let $\Jac_M(b_1,b_2,\dots,b_n)$ denote the Jacobian matrix
$(\partial_j(b_i))$ and let $\Jac(b_1,b_2,\dots,b_n)$ denote the Jacobian determinant $|\Jac_M(b_1,b_2,\dots,b_n)|$.
Thus the $i$th row of $\Jac_M(b_1,b_2,\dots,b_n)$ is $\nabla(b_i)$ where $\nabla=(\partial_1,\partial_2,\dots,\partial_n)$ is the gradient.

Let $a,f_1,f_2,f_3,\dots,f_{n-2}\in B$ and, for $f,g\in B$, let
\begin{equation}\{f,g\}=a\Jac(f,g,f_1,f_2,\dots,f_{n-2}).\label{definebracket}\end{equation}
Poisson brackets of this form, with $a=1$, appear in the literature of mathematical physics, for example see \cite{GMP,Przybysz}. Our aim in this section is to give an algebraic proof that \eqref{definebracket} defines a Poisson bracket on the rational function field $B$.

For an $(n-2)\times n$ matrix $M$ over $B$ and $1\leq i<j\leq n$, let $M_{ij}$ be the $(n-2)\times (n-2)$ minor obtained by deleting columns $i$ and $j$ of $M$ and taking the determinant.
Let $D$ be the $(n-2)\times n$ matrix with $i$th row $\nabla(f_i)$.  Then
\[\{x_i,x_j\}=(-1)^{i+j-1}aD_{ij}.\]
Also, if $u_1,u_2,\dots,u_{n-2}\in B$ are such that $a=u_1u_2\dots u_{n-2}$ then
\[\{x_i,x_j\}=(-1)^{i+j-1}E_{ij},\]
where $E$ is the $(n-2)\times n$ matrix with $i$th row $u_i\nabla(f_i)$.
\label{PBnotation}
\end{notn}

\begin{lem}\label{derivation}
Let $1\leq i\leq n$ and let $a,\phi_1, \phi_2,\dots,\phi_{i-1},\phi_{i+1},\dots \phi_n\in B$.
The map $\delta:B\rightarrow B$ given by
\[\delta(b)=a\Jac(\phi_1, \phi_2,\dots,\phi_{i-1},b, \phi_{i+1},\dots \phi_n)\]
is a derivation of $B$.
\end{lem}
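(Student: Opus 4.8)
The plan is to verify directly that $\delta$ is additive, $\C$-linear, and satisfies the Leibniz rule, all of which reduce to standard properties of the Jacobian determinant viewed as a multilinear function of the gradients of its arguments. Write $\delta(b) = a\Jac(\phi_1,\dots,\phi_{i-1},b,\phi_{i+1},\dots,\phi_n)$, and recall that $\Jac(b_1,\dots,b_n)$ is the determinant of the matrix whose rows are $\nabla(b_1),\dots,\nabla(b_n)$. Since $\nabla$ is $\C$-linear and the determinant is multilinear in its rows, $\C$-linearity and additivity of $\delta$ in the argument $b$ are immediate: $\nabla(b+b') = \nabla(b)+\nabla(b')$ and $\nabla(\lambda b) = \lambda\nabla(b)$ for $\lambda\in\C$, and these pass through the determinant in the $i$th row, the factor $a$ being irrelevant to linearity.

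The one substantive point is the Leibniz rule $\delta(bc) = b\,\delta(c) + c\,\delta(b)$. Here I would use that each $\partial_j$ is a derivation of $B$, so $\nabla(bc) = b\,\nabla(c) + c\,\nabla(b)$ as a row vector identity. Substituting this into the $i$th row of the Jacobian matrix and expanding the determinant along that row (cofactor expansion), multilinearity splits it as
\[
\Jac(\dots,bc,\dots) = b\,\Jac(\dots,c,\dots) + c\,\Jac(\dots,b,\dots),
\]
where the omitted entries $\phi_1,\dots,\phi_{i-1},\phi_{i+1},\dots,\phi_n$ are unchanged. Multiplying through by $a$ gives $\delta(bc) = b\,\delta(c)+c\,\delta(b)$, as required. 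Strictly, one should note that splitting a determinant along a row that is a $B$-linear combination (not merely a $\C$-linear combination) of two vectors is still valid, since the determinant is multilinear over the commutative ring $B$, not just over $\C$; this is what lets the scalars $b$ and $c$ come out.

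I do not anticipate a genuine obstacle here: the lemma is essentially the observation that "fix all but one slot of the Jacobian determinant and you get a derivation," and every ingredient — multilinearity of $\det$ over a commutative ring, the derivation property of the $\partial_j$, and hence of $\nabla$ — is elementary. The only thing to be careful about is bookkeeping: the argument $b$ sits in position $i$ with fixed functions on either side, so one should phrase the cofactor expansion along the $i$th row rather than appealing to a symmetric-looking formula. I would present the proof in three short lines (linearity, additivity, Leibniz via $\nabla(bc)=b\nabla(c)+c\nabla(b)$ and row expansion), treating the determinant computation as routine.
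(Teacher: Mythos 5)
Your proof is correct and is essentially the paper's argument: the paper simply performs the cofactor expansion along the $i$th row once, writing $\delta=a\sum_{j}(-1)^{i+j}m_{ij}\partial_j$ as a $B$-linear combination of the derivations $\partial_j$, which packages your linearity and Leibniz checks into a single observation. Same idea, slightly more compact presentation.
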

\begin{proof}
Denoting the $ij$-minor of the Jacobian matrix $\Jac_M(\phi_1, \phi_2,\dots,\phi_n)$ by $m_{ij}$,
\[\delta=a((-1)^{i+1}m_{i1}\partial_1+(-1)^{i+2}m_{i2}\partial_2+\dots+(-1)^{i+n}m_{in}\partial_n).\]
Hence $\delta$ is a derivation of $B$.
\end{proof}

\begin{thm}\label{PB}
Let $a,f_1,f_2,f_3,\dots,f_{n-2}\in B$ and let $\{-,-\}$ be as in \eqref{definebracket}.
Then $\{-,-\}$ is a Poisson bracket on $B$.
\end{thm}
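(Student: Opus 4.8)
The plan is to check the defining properties of a Poisson bracket one at a time, the first three being routine and the Jacobi identity being the one real point. Bilinearity is immediate, since the Jacobian determinant is multilinear in its rows and $\nabla$ is additive. Skew-symmetry $\{f,g\}=-\{g,f\}$ follows because interchanging $f$ and $g$ interchanges the first two rows $\nabla(f),\nabla(g)$ of the Jacobian matrix and hence changes the sign of the determinant. For the Leibniz property I would invoke Lemma~\ref{derivation}, applied with the distinguished slot in position $1$ and with the fixed functions $\phi_2,\dots,\phi_n$ taken to be $g,f_1,\dots,f_{n-2}$: this shows that $f\mapsto\{f,g\}$ is a derivation of $B$ for each fixed $g$, and combined with skew-symmetry it follows that $\{-,-\}$ is a derivation in each argument.

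It remains to establish the Jacobi identity. Here I would use two facts. First, each $f_j$ is a Casimir: $\{f_j,g\}=a\,\Jac(f_j,g,f_1,\dots,f_{n-2})=0$, since the row $\nabla(f_j)$ occurs twice in the Jacobian matrix. Second, the standard fact that, for any skew-symmetric biderivation, the Jacobiator $J(f,g,h):=\{f,\{g,h\}\}+\{g,\{h,f\}\}+\{h,\{f,g\}\}$ is again a derivation of $B$ in each of its three arguments. Because we are in characteristic zero, a derivation of $B$ over $\C$ is determined by its restriction to any subfield over which $B$ is algebraic; so it will be enough to exhibit such a subfield together with a generating set on which $J$ visibly vanishes.

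Then I would split into two cases. If the differentials $df_1,\dots,df_{n-2}$ are linearly dependent over $B$, then $\Jac(f,g,f_1,\dots,f_{n-2})$ vanishes identically, the bracket is zero, and there is nothing to prove. Otherwise they are $B$-linearly independent, which (again using characteristic zero) forces $f_1,\dots,f_{n-2}$ to be algebraically independent over $\C$; I then extend them to a transcendence basis $f_1,\dots,f_{n-2},g_1,g_2$ of $B$ over $\C$ and set $L=\C(f_1,\dots,f_{n-2},g_1,g_2)$, so that $B$ is algebraic over $L$. By the triderivation property it will suffice to check that $J$ vanishes on every triple of elements of the generating set $\{f_1,\dots,f_{n-2},g_1,g_2\}$ of $L$. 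But such a triple either repeats an entry, in which case $J$ vanishes by skew-symmetry, or it contains some $f_j$, in which case $J(f_j,u,v)=0$ because each of its three terms involves a bracket with the Casimir $f_j$; and there is no triple of three distinct non-Casimir elements, as only $g_1,g_2$ are available. Hence $J\equiv 0$, and $\{-,-\}$ is a Poisson bracket on $B$.

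The main obstacle is really just finding this route. A head-on verification of the Jacobi identity on the generators $x_1,\dots,x_n$, using the formulas $\{x_i,x_j\}=(-1)^{i+j-1}aD_{ij}$ of Notation~\ref{PBnotation}, is possible but unpleasant: it requires differentiating the minors $D_{ij}$ and invoking Plücker-type identities together with the symmetry of second partial derivatives — the mechanism that, for $n=3$, makes the scalar triple product $\nabla(f_1)\cdot(\nabla(a)\times\nabla(f_1))$ vanish because $\nabla(f_1)$ occurs twice — whereas the Casimir-plus-transcendence-basis argument avoids all of this. The only slightly delicate inputs are the two standard facts used above: that algebraic dependence of the $f_j$ over $\C$ forces $B$-linear dependence of the $df_j$, and that the Jacobiator of a skew-symmetric biderivation is a derivation in each argument.
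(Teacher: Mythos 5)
Your argument is correct, but it establishes the Jacobi identity by a genuinely different route from the paper. The paper's proof treats the case $a=1$ first, invoking Panov's $n$-Jacobi identity for the Jacobian determinant (checked on the coordinates $x_i$ and extended by derivation properties), and then bootstraps to general $a$ by reducing the extra terms to the three-term Pl\"ucker relation $D_{ij}D_{k\ell}-D_{ik}D_{j\ell}+D_{jk}D_{i\ell}=0$ among the maximal minors of the $(n-2)\times n$ matrix $D$ of Notation~\ref{PBnotation}. You instead exploit the facts that each $f_j$ is a Casimir of \eqref{definebracket}, that the Jacobiator of a skew-symmetric biderivation is an alternating triderivation, and that (after disposing of the degenerate case in which the bracket vanishes identically) $f_1,\dots,f_{n-2}$ extend to a transcendence basis $f_1,\dots,f_{n-2},g_1,g_2$ of $B$ over $\C$; every triple of distinct basis elements then contains a Casimir, so the Jacobiator vanishes on a subfield over which $B$ is separably algebraic, hence everywhere. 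This handles arbitrary $a$ in one stroke and avoids both the $n$-Jacobi identity and the Pl\"ucker relations; the price is reliance on three standard facts that would each need a line of justification in a full write-up: the triderivation property of the Jacobiator, the uniqueness of extension of derivations through separable algebraic extensions, and the characteristic-zero implication from $B$-linear independence of the gradients $\nabla f_j$ to algebraic independence of the $f_j$ over $\C$ (essentially the computation in the paper's Theorem~\ref{algdep}). What the paper's longer route buys is the explicit coordinate formula $\{x_i,x_j\}=(-1)^{i+j-1}aD_{ij}$ and its link to the Pl\"ucker relations, both of which are reused later in the paper.
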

\begin{proof} Applying Lemma~\ref{derivation} with $(\phi_1, \phi_2,\dots,\phi_n)=(f,g,f_1,f_2,\dots,f_{n-2})$ and $i=1,2$, we see that, if $\{-,-\}$ is defined as in \eqref{definebracket},
then $\{f,-\}$ and $\{-,g\}$ are derivations. Also $\{-,-\}$ is clearly antisymmetric so it remains to show that it satisfies the Jacobi identity.

We begin with the case $a=1$ where we can exploit the $n$-Jacobi identity for the Jacobian \cite{panov}.
Given an ordered set $F=\{f_0,f_1,f_2,f_3,\dots,f_{n-2}\}$ of $n-1$ elements of $B$, define $\partial_F:B\rightarrow B$
\[
\partial_F(h)=\Jac(h,f_0,f_1,f_2,\dots,f_{n-2}).
\]
There is a minor difference here to \cite{panov} where $h$ appears in the rightmost argument.
Note that $\partial_F(f_i)=0$ for $0\leq i\leq n-2$.
When $a=1$,
$\{g,f\}=\partial_F(g)$,
where $F=\{f,f_1,f_2,f_3,\dots,f_{n-2}\}$.
The $n$-Jacobi identity for the Jacobian says that, for $h_1,h_2,\dots,h_n\in B$,
\begin{multline*}\partial_F(\Jac(h_1,h_2,\dots,h_{n-1},h_n))=\Jac(\partial_F(h_1),h_2,\dots,h_{n-1},h_n)\\+
\Jac(h_1,\partial_F(h_2),\dots,h_{n-1},h_n)+\dots+\Jac(h_1,h_2,\dots,\partial_F(h_n)).\end{multline*}
The proof of this in \cite{panov} is presented  for the algebra of $C^\infty$-functions on a real manifold but it is valid for the rational function field $B$.  It is first checked when each $h_i=x_i$ and then, using derivation properties as in the proof of \cite[Proposition 1.14]{JoOh}, extended
first to the polynomial algebra and then to the rational function field.

Let $f,g,h\in B$.
Then
\begin{align*}
&\{\{g,h\},f\}
=\partial_F(\{g,h\})
=\partial_F(\Jac(g,h,f_1,f_2,\dots,f_{n-2}))\\
=&\Jac(\partial_F(g),h,f_1,f_2,\dots,f_{n-2})
+\Jac(g,\partial_F(h),f_1,f_2,\dots,f_{n-2})\\
&\quad\quad \text{ (by the }n\text{-Jacobi identity, the other summands being }0\text{)}\\
=&\{\{g,f\},h\}+\{g,\{h,f\}\}=-\{\{f,g\},h\}-\{\{h,f\},g\}.
\end{align*}
Thus $\{-,-\}$ satisfies the Jacobi identity and is a Poisson bracket on $B$.

Now let $a\in B$. We need to show that the bracket $a\{-,-\}$ satisfies the Jacobi identity.
As \[a\{f,a\{g,h\}\}=a^2\{f,\{g,h\}\}+a\{g,h\}\{f,a\}\]
and $\{-,-\}$ satisfies the Jacobi identity, it suffices to show that
\[\{g,h\}\{f,a\}+\{f,g\}\{h,a\}+\{h,f\}\{g,a\}=0\]
for all $a,f,g,h\in B$. As
$\{g,h\}\{f,-\}+\{f,g\}\{h,-\}+\{h,f\}\{g,-\}$ and the similar maps, where three of $a,f,g,h$  are fixed, are derivations, it suffices to show that
\begin{equation}\{x_i,x_j\}\{x_k,x_\ell\}+\{x_k,x_i\}\{x_j,x_\ell\}+\{x_j,x_k\}\{x_i,x_\ell\}=0\label{ijkl}
\end{equation}
for $1\leq i\leq j\leq k\leq \ell\leq n$. Clearly \eqref{ijkl} holds when any two of $i, j, k,\ell$ are equal so we may assume that $i<j<k<\ell$. In this case \eqref{ijkl} is, using Notation~\ref{PBnotation},
\[D_{ij}D_{k\ell}-D_{ik}D_{j\ell}+D_{jk}D_{i\ell}=0.\]
This is a Pl\"{u}cker relation for the $(n-2)\times n$ matrix $D$, see
\cite[Theorem 1.3]{GKZ}, or \cite[Chapter VII \S6]{hodgepedoe}, where Pl\"{u}cker relations are called $p$-relations. Indeed it is one of the three-term Pl\"{u}cker relations stated explicitly in \cite[foot of p311]{hodgepedoe}. In the notation of \cite{hodgepedoe}, where subscripts indicate included rather than excluded rows, it is
\[
p_{i_1\dots i_{n-4}k\ell}p_{i_1\dots i_{n-4}ij}+p_{i_1\dots i_{n-4}\ell j}p_{i_1\dots i_{n-4}ki}
+p_{i_1\dots i_{n-4}\ell i}p_{i_1\dots i_{n-4}kj}=0,
\]
where $\{i_1,\dots,i_{n-4}\}=\{1,2,3,\dots,n\}\backslash\{i,j,k,\ell\}$.
\end{proof}

\begin{thm}\label{algdep}
If $f_1,f_2\dots, f_{n-2}$ are algebraically dependent over $\C$ then $\{-,-\}=0$.
\end{thm}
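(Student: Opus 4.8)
The plan is to show that algebraic dependence of $f_1,\dots,f_{n-2}$ over $\C$ forces every bracket $\{x_i,x_j\}$ to vanish, which by the derivation property of each hamiltonian (equivalently, by bilinearity and the Leibniz rule applied coordinate-by-coordinate) suffices to conclude $\{-,-\}=0$ on all of $B$. By \eqref{definebracket} and Notation~\ref{PBnotation}, $\{x_i,x_j\}=(-1)^{i+j-1}aD_{ij}$, where $D$ is the $(n-2)\times n$ matrix whose $i$th row is $\nabla(f_i)$, so the claim reduces to showing that every maximal minor $D_{ij}$ of $D$ is zero, i.e.\ that $\operatorname{rank} D\leq n-3$.

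The key step is the standard fact that if $g_1,\dots,g_m$ are rational functions in $\C(x_1,\dots,x_n)$ that are algebraically dependent over $\C$, then their gradients $\nabla(g_1),\dots,\nabla(g_m)$ are linearly dependent over $B$. First I would reduce to the case where some nonzero polynomial relation $P(f_1,\dots,f_{n-2})=0$ actually involves all of the $f_i$ (else replace the tuple by a dependent sub-tuple and note that deleting rows from $D$ only lowers the rank, so it is enough to kill the minors of a smaller matrix — though in fact we want the full matrix to drop rank, so more care is needed: instead choose $P$ of minimal total degree among nonzero relations). Applying $\partial_j$ to the identity $P(f_1,\dots,f_{n-2})=0$ and using the chain rule gives, for each $j=1,\dots,n$,
\[
\sum_{i=1}^{n-2}\left(\frac{\partial P}{\partial y_i}\right)\!(f_1,\dots,f_{n-2})\,\partial_j(f_i)=0,
\]
which says precisely that the vector $v=\bigl((\partial P/\partial y_1)(f),\dots,(\partial P/\partial y_{n-2})(f)\bigr)\in B^{n-2}$ lies in the left kernel of $D$. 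It remains to check that $v\neq 0$: if every $(\partial P/\partial y_i)(f_1,\dots,f_{n-2})$ were zero, then since $\partial P/\partial y_i$ has strictly smaller degree than $P$, minimality of $P$ would force each $\partial P/\partial y_i$ to be the zero polynomial, so $P$ would be constant in characteristic $0$ — contradicting $P(f)=0$ with $P\neq 0$ nonconstant. (Here characteristic $0$ is essential and is where the hypothesis that the ground field is $\C$ is used.) Hence $v\neq 0$ annihilates all rows of $D$, so $\operatorname{rank} D\le n-3$ and every maximal minor $D_{ij}$ vanishes.

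With all $\{x_i,x_j\}=0$ established, I would finish by observing that for arbitrary $f,g\in B$ one has $\{f,g\}=\sum_{i,j}\partial_i(f)\,\partial_j(g)\,\{x_i,x_j\}=0$; this follows from Lemma~\ref{derivation} (each of $\{f,-\}$ and $\{-,g\}$ is a $B$-linear combination of the $\partial_j$ with coefficients built from the data, and is determined by its values on the $x_i$), or directly from expanding the Jacobian determinant along its first two rows. I expect the only real obstacle to be the bookkeeping around the minimal-degree choice of $P$ and the clean verification that $v\neq 0$; the linear-algebra passage from "$v$ kills every row of $D$" to "every maximal minor of $D$ is zero" is immediate since a maximal minor is, up to sign, the determinant of $D$ with one column removed, and a left-kernel vector of $D$ restricts to a left-kernel vector of each such submatrix.
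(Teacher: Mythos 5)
Your proposal is correct, but it follows a genuinely different route from the paper's. Both arguments start from a nonzero annihilating polynomial of minimal total degree and differentiate the relation, but the paper does not pass through the rank of the gradient matrix $D$ at all: it applies the single derivation $\delta(b)=a\Jac(f,g,b,f_2,\dots,f_{n-2})$ of Lemma~\ref{derivation} directly to $G(f_1,\dots,f_{n-2})=0$. Since $\delta$ kills $f_2,\dots,f_{n-2}$ (repeated rows) and sends $f_1$ to $\{f,g\}$, the chain rule collapses to the single identity $\bigl(\partial G/\partial y_1\bigr)(f_1,\dots,f_{n-2})\cdot\{f,g\}=0$, and minimality of $G$ (after arranging $\deg_{y_1}G\geq 1$) gives the nonvanishing of the first factor in one step, for arbitrary $f,g\in B$ simultaneously. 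This avoids your rank/minor analysis, the reduction from general $f,g$ to the generators via $\{f,g\}=\sum_{i,j}\partial_i(f)\partial_j(g)\{x_i,x_j\}$, and the characteristic-zero argument that not all $\partial P/\partial y_i$ can vanish. What your approach buys is the standard, reusable fact that algebraic dependence forces the Jacobian matrix to drop rank, so that every $\{x_i,x_j\}=\pm aD_{ij}$ vanishes for structural reasons; the cost is a slightly longer nonvanishing argument and an extra bilinearity step, both of which you handle correctly. (One trivial slip: the maximal minor $D_{ij}$ of the $(n-2)\times n$ matrix $D$ is obtained by deleting \emph{two} columns, not one; this does not affect your argument, since a left-kernel vector of $D$ is a left-kernel vector of every column-submatrix.)
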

\begin{proof}
Let $0\neq G=G(y_1,\dots, y_{n-2})\in {\C}[y_1,\dots,y_{n-2}]$ be of minimal total degree such that $G(f_1,f_2,\dots, f_{n-2})=0$.
Without loss of generality, we may assume that the degree in $y_1$ of $G$ is at least one.
Let
\[G=\sum_{{\bf r}=(r_1,\dots, r_{n-2})}\alpha_{\bf r}y_1^{r_1}y_2^{r_2}\dots y_{n-2}^{r_{n-2}}.\]
Let $f,g\in B$ and let $\delta$ be the derivation in Lemma~\ref{derivation}, in the case where $i=3$, $\phi_1=f$, $\phi_2=g$ and, for $4\leq j\leq n$, $\phi_j=f_{j-2}$. Then $\delta(f_k)=0$ for $2\leq k\leq n-2$, whereas $\delta(f_1)=\{f,g\}$.
Then
\begin{align*}
0&=\delta(G(f_1,f_2,\dots, f_{n-2}))\\
&=\delta\left(\sum_{{\bf r}=(r_1,\dots, r_{n-2})}\alpha_{\bf r}f_1^{r_1}f_2^{r_2}\dots f_{n-2}^{r_{n-2}}\right)\\
&=\left(\sum_{\bf r}r_1\alpha_{\bf r}f_1^{r_1-1}f_2^{r_2}\dots f_{n-2}^{r_{n-2}}\right)\{f,g\}.
\end{align*}
By the minimality of $G$, $\sum_{\bf r}r_1\alpha_{\bf r}f_1^{r_1-1}f_2^{r_2}\dots f_{n-2}^{r_{n-2}}\neq 0$
so $\{f,g\}=0$.
\end{proof}

\section{Poisson spectra}
The following definitions and the claims made for them are well-known. Appropriate references include
\cite{goodsemiclass,Good3,JoOh}.

\begin{defns}\label{Pdefns}
Let $A$ be a Poisson algebra with bracket $\{-,-\}$. The \emph{Poisson centre} of $A$, denoted
$\Pz(A)$, of $A$ is $\{a\in A: \{a,r\}=0\mbox{ for
all }r\in A\}$.

An ideal $I$ of $A$ is a \emph{Poisson ideal} if $\{i,r\}\in I$ for all $i\in I$ and $r\in A$. If $I$ is a Poisson ideal of $A$
then $A/I$ is a Poisson algebra with $\{a+I,b+I\}=\{a,b\}+I$ for all $a, b\in A$. A Poisson ideal $P$ of $A$ is \emph{Poisson prime} if, for all Poisson ideals $I$ and $J$ of $A$,
$IJ\subseteq P$ implies $I\subseteq P$ or $J\subseteq P$. If $A$ is Noetherian then this is equivalent to saying that $P$ is both a prime ideal and a Poisson ideal. The \emph{Poisson spectrum} of $A$, written $\Pspec A$, is the set of all Poisson prime ideals of $A$. A maximal ideal $M$ of $A$ is said to be a \emph{Poisson maximal ideal} if it is also a Poisson ideal. This is not equivalent to saying that $M$ is maximal as a Poisson ideal.

The \emph{Poisson core} of an ideal $I$ of $A$, denoted $\mathcal{P}(I)$, is the largest Poisson ideal of $A$ contained in $I$. If $P$ is a prime ideal of $A$ then
$\mathcal{P}(I)$ is Poisson  prime.
A Poisson ideal $P$ of $A$ is \emph{Poisson primitive} if $P=\mathcal{P}(M)$ for some maximal ideal $M$ of $A$.
Every Poisson primitive ideal is Poisson prime.

If $\mathcal{S}$ is a multiplicatively closed subset of a Poisson algebra $A$ then the localization $A_\mathcal{S}$ is also a Poisson algebra, with $\{as^{-1},bt^{-1}\}$ computed using the quotient rule for derivations.
If $P$ is a Poisson prime ideal of $A$ then the quotient field $Q(A/P)$ is a Poisson algebra and $P$ is said to be \emph{rational} if $\Pz (Q(A/P))=\C$. For a Poisson prime ideal $P$ of an affine Poisson algebra $A$, there is, by \cite[1.7(i) and 1.10]{Oh4}, a sequence of implications:
$$P\text{ is locally closed $\Rightarrow P$ is Poisson primitive $\Rightarrow P$ is rational}.$$
To establish the \emph{Poisson Dixmier-Moeglin equivalence}, it is enough to show that if $P$ is a rational Poisson prime ideal of $A$ then $P$ is locally closed. For further discussion of this, see \cite{goodsemiclass,Good3}.

A $\C$-algebra automorphism $\theta$ of a Poisson algebra $A$ is a \emph{Poisson automorphism} of $A$ if $\theta(\{a,b\})=
\{\theta(a),\theta(b)\}$ for all $a,b\in A$ and is a \emph{Poisson anti-automorphism} of $A$ if $\theta(\{a,b\})=
\{\theta(b),\theta(a)\}$ for all $a,b\in A$. Under composition, the set of all Poisson automorphisms and Poisson anti-automorphisms of $A$ is a group in which the Poisson automorphisms form a normal subgroup of index $2$.

The height of a prime ideal $P$ of $A$ will be denoted $\hgt P$.
\end{defns}

\begin{defn}\label{resnullproper} Let $A$ be a Poisson algebra and $I$ be a Poisson ideal of $A$. Following \cite[Definition 1.8]{JoOh}, we say that
$I$ is \emph{residually
null} if the induced Poisson bracket on $A/I$ is zero.
 This is equivalent to saying that $I$ contains
all elements of
the form $\{a,b\}$ where $a,b\in A$, or that $I$ contains all such
elements where $a,b\in G$ for some generating set $G$ for $A$.
We shall also say that a Poisson ideal is a \emph{proper Poisson ideal} if it is not residually null.
\end{defn}

\begin{lem}\label{poissonprimitive}  Let $A$ be a Poisson algebra.
\begin{enumerate}
\item Every  residually null Poisson primitive ideal $P$ is a  Poisson maximal ideal.
\item A Poisson algebra $A$ is Poisson simple if and only if there does not exist a nonzero  Poisson primitive ideal of $A$.
 \item Let $A=\C[x_1,\dots,x_n]$ be a polynomial algebra with a Poisson bracket. Let $P$ be a proper Poisson prime ideal in $A$ of height $\geq n-2$. Then
$P$ is locally closed and Poisson primitive.

    \end{enumerate}
\end{lem}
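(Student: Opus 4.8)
Parts (1) and (2) are formal consequences of the definitions. For (1), if $P$ is a residually null Poisson primitive ideal, then $P=\mathcal P(M)$ for some maximal ideal $M$; I would show $M$ itself is a Poisson ideal, whence $M=\mathcal P(M)=P$ is Poisson maximal. Indeed, since $P$ is residually null, $\{a,b\}\in P\subseteq M$ for all $a,b\in A$, so the hamiltonian $\ham(a)$ maps $A$ into $M$ for every $a$; as $M/P$ is then contained in the Poisson centre of $A/P$... more directly: for $m\in M$ and $a\in A$, write $\{a,m\}$; we need $\{a,m\}\in M$. Since $A/P$ has zero bracket and $M\supseteq P$, we have $\{a,m\}\in P\subseteq M$. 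So $M$ is Poisson, giving (1). For (2): $A$ is Poisson simple iff $0$ is a maximal Poisson ideal; if there is a nonzero Poisson primitive ideal $\mathcal P(M)$ it is in particular a nonzero proper Poisson ideal, contradicting simplicity, and conversely if $A$ is not Poisson simple there is a nonzero proper Poisson ideal, which is contained in some maximal ideal $M$, and then $\mathcal P(M)\supseteq$ (the Poisson core of that ideal) is... I would instead argue: if $A$ is not Poisson simple, pick a maximal Poisson ideal $N\neq 0$; then $A/N$ is Poisson simple, and one checks a Poisson simple affine algebra over $\C$ either is a field (so $N$ maximal, hence $N=\mathcal P(N)$ Poisson primitive and nonzero) — the remaining case where $A/N$ is infinite-dimensional Poisson simple cannot give a counterexample because then... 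I expect the paper handles this with a cleaner standard fact, so I will cite that every nonzero Poisson ideal contains a nonzero Poisson primitive ideal in the affine setting, or simply prove the contrapositive directly.

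The substantive claim is (3). The plan is to use the structural description of proper Poisson prime ideals from Section~2 (the localizations $A_\gamma$ with central polynomial subalgebras $C_\gamma$ in $n-2$ variables) together with a dimension count. Let $P$ be a proper Poisson prime of height $\geq n-2$ in $A=\C[x_1,\dots,x_n]$. After passing to an appropriate localization $A_\gamma$ in which $P$ survives, the main result of Section~2 says $P A_\gamma$ is a minimal prime over $QA_\gamma$ where $Q=PA_\gamma\cap C_\gamma$ is a prime of the polynomial ring $C_\gamma=\C[c_1,\dots,c_{n-2}]$. Since $\hgt P\geq n-2$, the prime $Q$ must have height $n-2$ in $C_\gamma$, i.e. $Q$ is a maximal ideal of $C_\gamma$, so $Q=(c_1-\lambda_1,\dots,c_{n-2}-\lambda_{n-2})$ for scalars $\lambda_i\in\C$. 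Then $\{c_i-\lambda_i\}$ is a finite generating set for $Q$, and because the $c_i$ lie in the Poisson centre of $A_\gamma$, the ideal $QA_\gamma$ is a Poisson ideal generated by Poisson-central elements; its minimal primes are permuted trivially by all hamiltonians, so each is Poisson, and in particular $PA_\gamma$ is one of the finitely many minimal primes over $QA_\gamma$.

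To get local closedness I would exhibit $P$ as the unique Poisson prime lying over $Q$ in a suitable sense, or rather show $\{P\}$ is locally closed in $\Pspec A$. The key point: the Poisson primes $P'$ of $A_\gamma$ with $P'\cap C_\gamma = Q$ are exactly the (finitely many) minimal primes over the single ideal $QA_\gamma$; hence $\{P A_\gamma\}$ is open in the closed set $V(QA_\gamma)\cap\Pspec A_\gamma$, so $PA_\gamma$ is locally closed in $\Pspec A_\gamma$, and pulling back along the localization map $\Pspec A_\gamma\hookrightarrow\Pspec A$ (an order-embedding with open image on the relevant piece) shows $P$ is locally closed in $\Pspec A$. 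Finally, by the chain of implications recorded in Definitions~\ref{Pdefns} (locally closed $\Rightarrow$ Poisson primitive $\Rightarrow$ rational), local closedness gives Poisson primitivity, completing (3).

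The main obstacle is the bookkeeping around the localization: ensuring that a proper Poisson prime $P$ of $A$ of height $\geq n-2$ does indeed meet some $A_\gamma$ nontrivially and that the height is preserved, then translating "finitely many minimal primes over $QA_\gamma$" into genuine local closedness of the point $P$ in $\Pspec A$ rather than merely in $\Pspec A_\gamma$. The height hypothesis is exactly what forces $Q$ to be maximal in $C_\gamma$ and hence finitely generated by central elements cutting out a zero-dimensional Poisson-central slice — without it $Q$ could have positive-dimensional vanishing locus and the argument for finiteness of the fibre, and thus for local closedness, would break down. I expect this is where the bulk of the proof's care lies, with everything else being the formal ladder of implications already set up in the paper.
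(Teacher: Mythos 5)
Parts (1) and (2) are essentially right and match the paper, though your (2) meanders: the clean finish, which the paper uses, is simply that a nonzero proper Poisson ideal $I$ lies in some maximal ideal $M$, and then $0\neq I\subseteq\mathcal{P}(M)$ because $\mathcal{P}(M)$ is by definition the \emph{largest} Poisson ideal inside $M$; no discussion of maximal Poisson ideals or of whether $A/N$ is a field is needed. You gesture at exactly this and then abandon it, so nothing is wrong, but the detour is unnecessary.

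Part (3) is where there is a genuine problem. First, the statement is about an \emph{arbitrary} Poisson bracket on $\C[x_1,\dots,x_n]$ and is proved in the paper before the Jacobian bracket and the $A_\gamma$, $C_\gamma$, $\Gamma$ apparatus are even introduced; indeed Lemma~\ref{fA2}(3) later \emph{uses} part (3) after Theorem~\ref{AC} has pinned down the height. Your proof routes through Theorem~\ref{AC}, so at best it proves a special case of the wrong generality and risks circularity in the paper's logical order. Second, even for the Jacobian bracket your deduction of local closedness has a gap: knowing that $PA_\gamma$ is one of finitely many minimal primes over $QA_\gamma$ does not make $\{PA_\gamma\}$ open in its closure in $\Pspec A_\gamma$, because you must also exclude the Poisson primes \emph{properly containing} $PA_\gamma$ (which contain $QA_\gamma$ but are not minimal over it) from accumulating at $P$; concretely you need an element lying in every such prime but not in $PA_\gamma$. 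The ingredient that supplies this, and which your proposal never invokes, is the fact (\cite[Proposition 3.2]{JoOh}) that a Poisson prime of height $>n-2$ in $\C[x_1,\dots,x_n]$ is residually null. The paper's entire proof of (3) is: since $P$ is proper, $\{x_k,x_j\}\notin P$ for some $k,j$; any Poisson prime $Q\supsetneq P$ has $\hgt Q>n-2$, hence is residually null, hence contains $\{x_k,x_j\}$; so $P$ is locally closed, and locally closed implies Poisson primitive by \cite[1.7(i)]{Oh4}. You should replace your argument for (3) with this direct one.
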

\begin{proof} (1)
 Let $M$ be a maximal ideal of $A$ such that $P=\mathcal{P}(M)$. Suppose that $P$ is residually null. Then $M$ is Poisson and $P=\mathcal{P}(M)=M$ is maximal.

(2) The \lq only if\rq~ part is clear. For the converse, suppose that $A$ is not simple, let $I$ be a proper ideal of $A$ that is Poisson and let $M$ be a maximal ideal  of $A$ containing $I$. Then $\mathcal{P}(M)$ is Poisson primitive and $0\neq I\subseteq \mathcal{P}(M)$.

(3) Since $P$ is proper, $\{x_k,x_j\}\notin P$ for some  pair $k,j$. Let $Q$ be a Poisson prime ideal  containing $P$ properly. Then  $\hgt Q> n-2$ and hence  $Q$ is residually null by \cite[Proposition 3.2]{JoOh}.
It follows that $\{x_k,x_j\}\in Q$. Thus $P$ is locally closed and hence, by \cite[1.7(i)]{Oh4}, $P$ is Poisson primitive.
\end{proof}

\begin{notn} For the remainder of the paper, let $A=\C[x_1,\dots,x_n]$ and $B=\C(x_1,\dots,x_n)$, where $n\geq 3$. Let
$s_1,t_1,\dots, s_{n-2},t_{n-2}\in A$
be such that $s_i$ and $t_i\neq0$ are coprime for $1\leq i\leq n-2$. Let $f_i=s_1t_1^{-1}\in B$, $1\leq i\leq n-2$,
and let $a=t_1^2t_2^2\dots t_{n-2}^2$. By Theorem~\ref{PB}, there is a Poisson bracket on $B$ such that
$\{f,g\}=a\Jac(f,g,f_1,f_2,\dots,f_{n-2})$ for all $f,g\in B$.
Thus $\{f,g\}=\det J$, where $J$ is the $n\times n$ matrix with first row $\nabla f$, second row $\nabla g$ and, for $3\leq i\leq n$, $i$th row
$t_{i-2}^2\nabla(s_{i-2}t_{i-2}^{-1})$.
In the notation of \ref{PBnotation}, with each $u_i=t_i^2$,
\[\{x_i,x_j\}=(-1)^{i+j-1}aD_{ij}=(-1)^{i+j-1}E_{ij}.\]

Note that $t_i^2\partial_j(s_it_i^{-1})\in A$ for $1\leq i\leq n-2$ and
$1\leq j\leq n$. It follows that $\{f,g\}\in A$
for all $f,g\in A$ and hence that $A$ is a Poisson subalgebra of $B$.

 If $f_1,\dots, f_{n-2}$ are algebraically dependent over $\C$ then the Poisson bracket $\{-,-\}=0$, by Theorem~\ref{algdep}, so, henceforth, we shall assume that $f_1,\dots, f_{n-2}$ are algebraically independent over $\C$.
 \end{notn}

\begin{example}\label{qmat} 
Let $n=4$ and let \[s_1=x_1x_4-x_2x_3, \quad t_1=1, \quad s_2=x_2,\quad  t_2=x_3.\]
Then, in the notation of \ref{PBnotation}, \[E=\begin{pmatrix}t_1^2\nabla(f_1)\\ t_2^2\nabla(f_2)\end{pmatrix}=\begin{pmatrix}x_4&-x_3&-x_2&x_1\\ 0&x_3&-x_2&0\end{pmatrix}\]
and the resulting Poisson bracket on $\C[x_1, x_2, x_3, x_4]$ is such that:
\[\begin{aligned}
\{x_1,x_2\}&=x_1x_2, &\{x_1,x_3\}&=x_1x_3,&\{x_1,x_4\}&=2x_2x_3, \\
\{x_2,x_3\}&=0, &\{x_2,x_4\}&=x_2x_4,&\{x_3,x_4\}&=x_3x_4.
\end{aligned}
\]
This is the well-known Poisson bracket associated with $2\times2$ quantum
matrices, see \cite[2.9]{Oh4}. This example will be used to illustrate our methods and results.
\end{example}

\begin{example}\label{symm}
Let $n=4$ and let $s_1=x_1+x_2+x_3+x_4$, $s_2=x_1x_2+x_1x_3+x_1x_4+x_2x_3+x_2x_4+x_3x_4$ and $t_1=t_2=1$.
In the notation of \ref{PBnotation}, \[E=\begin{pmatrix}1&1&1&1\\x_2+x_3+x_4&x_1+x_3+x_4&x_1+x_2+x_4&x_1+x_2+x_3\end{pmatrix}\]
and, for the resulting Poisson bracket on $\C[x_1, x_2, x_3, x_4]$,
\[\begin{aligned}
\{x_1,x_2\}&=x_3-x_4, &\{x_1,x_3\}&=x_4-x_2,&\{x_1,x_4\}&=x_2-x_3, \\
\{x_2,x_3\}&=x_1-x_4, &\{x_2,x_4\}&=x_3-x_1,&\{x_3,x_4\}&=x_1-x_2.
\end{aligned}
\]
Here the elementary symmetric polynomials $s_1$ and $s_2$ are Poisson central. The prime ideal generated by $x_1-x_2$, $x_1-x_3$ and $x_1-x_4$ is residually null Poisson as are all the maximal ideals of the form $(x_1-\lambda,x_2-\lambda,x_3-\lambda,x_4-\lambda)$.

As $\{x_i,x_j\}$ is homogeneous of degree one, the Poisson bracket here is the Kirillov-Kostant-Souriau bracket,
\cite[III.5.5]{BGl}, for a $4$-dimensional Lie algebra $\mathfrak{g}$ in which $z:=x_1+x_2+x_3+x_4$ is central.
If $\mathfrak{s}=\mathfrak{g}/\C z$ then it is a routine calculation to  check that $[\mathfrak{s},\mathfrak{s}]=\mathfrak{s}$ whence, by
\cite[3.2.4]{erdmann}, $\mathfrak{s}\simeq \mathfrak{sl}_2$.

\end{example}
\begin{examples} The examples in \ref{qmat} and \ref{symm} exhibit very different symmetry properties.
In Example~\ref{symm},
there is alternating symmetry in the following sense. For each $\alpha\in S_4$, there are $\C$-automorphisms $\phi_\alpha$ and $\theta_\alpha$ of $A$ such that, for $1\leq i\leq 4$,
$\phi_\alpha(x_i)=x_{\alpha(i)}$ and $\theta_\alpha(x_i)=(-1)^{\sgn \alpha} x_{\alpha(i)}$.
Then $\theta_\alpha$ is a Poisson automorphism. It is enough to check this for the generators $(1~2)$ and $(1~2~3~4)$ of $S_4$, for which
\[\theta_{(1~2)}:x_1\mapsto -x_2,\; x_2\mapsto -x_1,\; x_3\mapsto -x_3,\; x_4\mapsto -x_4\]
and
\[\theta_{(1~2~3~4)}:x_1\mapsto -x_2,\; x_2\mapsto -x_3,\; x_3\mapsto -x_4,\; x_4\mapsto -x_1.\]
Note that $\theta_\alpha(s_2)=s_2$ for all $\alpha\in S_4$, whereas $\theta_\alpha(s_1)=\sgn \alpha~s_1$.
If $\alpha$ is even then $\phi_\alpha=\theta_\alpha$ and if $\alpha$ is odd then $\phi_\alpha$ is a Poisson anti-automorphism.


In Example~\ref{qmat}, there is a well-known  action
of the group
\[H:=\{(h_1,h_2,h_3,h_4)\in(\C^*)^4:h_1h_4=h_2h_3\}\]  on $A=\C[x_1,x_2,x_3, x_4]$, acting as Poisson automorphisms with $x_i\mapsto h_ix_i$ for $1\leq i\leq n$.
If, in \cite[1.2]{Oh4}, we take $A_{2,\Gamma}^{P,Q}=\C[y_1,x_1, y_2,x_2]$ with $q_1=q_2=0, p_1=p_2=-2$ and $\gamma_{12}=-1$ then $A_{2,\Gamma}^{P,Q}$ is Poisson isomorphic to $A$ via the map $y_1\mapsto x_2, x_1\mapsto x_3, y_2\mapsto x_1,x_2\mapsto x_4$. The above action of $H$ on $A$ then corresponds to the action of $H$  on $A_{2,\Gamma}^{P,Q}$ specified in \cite{Oh7} and,  by \cite[2.6]{Oh7}, the $H$-prime Poisson ideals of $A$ are as follows:
$$\begin{array}{c}
0,\\
 x_2A, \quad x_3A, \quad  DA,\\
  x_1A+ x_2A, \quad   x_2A+ x_4A, \quad  x_2A+ x_3A,\quad   x_1A+ x_3A, \quad   x_3A+ x_4A,\\
x_1A+x_2A+x_3A,\;  x_1A+x_2A+x_4A, \;  x_1A+x_3A+x_4A, \;  x_2A+x_3A+x_4A,\\
  x_1A+  x_2A+ x_3A+ x_4A,
\end{array}$$
where $D=x_1x_4-x_2x_3$, the determinant.

This is a special case of a general situation. The multiplicative group $(\C^*)^n$ acts, as algebra automorphisms, on $A$  by the rule
\[(h_1,\dots, h_n).f=f(h_1x_1, \dots, h_n x_n).\]
With $E_{ij}$ as in Notation~\ref{PBnotation}, let $H$ be the subgroup
\[\{(h_1,\dots, h_n):h.E_{ij}=h_ih_jE_{ij}\text{ for } 1\leq i,j\leq n\}.\] If $h=(h_1,\dots, h_n)\in H$
then, for $1\leq i,j\leq n$,
\[\{h.x_i, h.x_j\}=h_ih_j\{x_i,x_j\}=h_ih_j(-1)^{i+j-1}E_{ij}=(-1)^{i+j-1}h.E_{ij}=h.\{x_i,x_j\}.\]
Thus $H$ acts on $A$ by Poisson automorphisms. In Example~\ref{qmat}, $H\simeq (\C^*)^3$, the $3$-torus.
Note that $H$ might be trivial. 
\end{examples}

\begin{notn}
Consider the group \[H^\prime:=\{h\in(\C^*)^n:h.s_i\in \C s_i\text{ and }h.t_i\in \C t_i\text{ for all }i\}.\]
This group is readily calculated from the data and its elements sometimes, but not always, act on $A$ as Poisson automorphisms. In Example~\ref{qmat},
all elements of $H^\prime$ act as Poisson automorphisms. However, in Example \ref{symm},
 where $H^\prime=\{(h_1,h_1,h_1,h_1)\}$, only $(1,1,1,1)$ acts as a Poisson automorphism.
For $1\leq i\leq n-2$, let $\sigma_i:H^\prime\rightarrow \C$ and  $\tau_i:H^\prime\rightarrow \C$ be such that
$h.s_i=\sigma_i(h)s_i$ and $h.t_i=\tau_i(h)t_i$. Let
$\rho:H^\prime\to\C$ be such that, for $h\in H^\prime$, $\rho(h)=\sigma_1(h)\tau_1(h)\dots \sigma_{n-2}(h)\tau_{n-2}(h)$.
 The next result gives a criterion for an element of $H^\prime$ to act as a Poisson automorphism of $A$.
\end{notn}

\begin{prop}\label{whenPauto}
Let $h=(h_1,h_2,\dots,h_n)\in H^\prime$. Then $h$ acts as a Poisson automorphism if and only if
$\rho(h)=h_1\dots h_n$.
\end{prop}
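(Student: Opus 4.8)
The plan is to compute directly how the defining formula $\{f,g\}=a\,\Jac(f,g,f_1,\dots,f_{n-2})$ transforms under the action of $h$ and then read off the criterion. Since $B=Q(A)$, a $\C$-algebra automorphism of $A$ extends uniquely to $B$, the action of $(\C^*)^n$ on $A$ extends to $B$, and the bracket on $B$ is determined by that on $A$ via the quotient rule; hence $h$ acts as a Poisson automorphism of $A$ if and only if $h.\{f,g\}=\{h.f,h.g\}$ for all $f,g\in B$, and we may work throughout in $B$.

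First I would record the effect of $h$ on partial derivatives. Writing $h.f=f(h_1x_1,\dots,h_nx_n)$, the chain rule gives $\partial_j(h.f)=h_j\,h.(\partial_j f)$ for $1\le j\le n$. Consequently, for $b_1,\dots,b_n\in B$, each row $\nabla(h.b_i)$ of the Jacobian matrix $\Jac_M(h.b_1,\dots,h.b_n)$ is obtained from the entrywise $h$-image of $\nabla(b_i)$ by multiplying the $j$th coordinate by $h_j$; that is, $\Jac_M(h.b_1,\dots,h.b_n)=\big(h.\Jac_M(b_1,\dots,b_n)\big)\,\mathrm{diag}(h_1,\dots,h_n)$. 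Taking determinants,
\[
\Jac(h.b_1,\dots,h.b_n)=h_1h_2\cdots h_n\cdot h.\Jac(b_1,\dots,b_n).
\]
Applying this with $(b_1,\dots,b_n)=(f,g,f_1,\dots,f_{n-2})$ expresses $h.\Jac(f,g,f_1,\dots,f_{n-2})$ through $\Jac(h.f,h.g,h.f_1,\dots,h.f_{n-2})$. Since $h\in H^\prime$ we have $h.f_i=(h.s_i)(h.t_i)^{-1}=\sigma_i(h)\tau_i(h)^{-1}f_i$, and because the Jacobian determinant scales by $c$ whenever one of its arguments is multiplied by a constant $c\in\C$, these scalars pull out and replace $h.f_i$ by $f_i$ at the cost of the factor $\prod_{i=1}^{n-2}\sigma_i(h)\tau_i(h)^{-1}$. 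Combining this with $h.a=\big(\prod_{i=1}^{n-2}\tau_i(h)^2\big)a$ and the definition $\rho(h)=\prod_{i=1}^{n-2}\sigma_i(h)\tau_i(h)$, a short bookkeeping calculation yields
\[
h.\{f,g\}=\frac{\rho(h)}{h_1h_2\cdots h_n}\,\{h.f,h.g\}\qquad\text{for all }f,g\in B.
\]

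Finally I would conclude from this identity. If $\rho(h)=h_1\cdots h_n$ then the identity reads $h.\{f,g\}=\{h.f,h.g\}$, so $h$ is a Poisson automorphism. Conversely, suppose $h$ is a Poisson automorphism. Since $f_1,\dots,f_{n-2}$ are algebraically independent over $\C$, the rows $\nabla(f_i)$ of the matrix $D$ of Notation~\ref{PBnotation} are linearly independent over $B$ (Jacobian criterion), so some minor $D_{ij}\ne0$ and hence $\{x_i,x_j\}=(-1)^{i+j-1}aD_{ij}\ne0$ for some pair $i<j$. For that pair, $\{h.x_i,h.x_j\}=h_ih_j\{x_i,x_j\}\ne0$, and comparing $h.\{x_i,x_j\}=\{h.x_i,h.x_j\}$ with the displayed identity forces $\rho(h)/(h_1\cdots h_n)=1$. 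The only delicate point is the careful tracking of scalar factors — in particular keeping the exponents on the $\tau_i(h)$ correct, since $a$ involves the $t_i^2$ — but no conceptual difficulty is involved.
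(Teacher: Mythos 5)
Your proof is correct and follows essentially the same route as the paper: both track how the action of $h$ rescales the rows and columns of the relevant Jacobian data to arrive at the identity $h.\{f,g\}=\rho(h)(h_1\cdots h_n)^{-1}\{h.f,h.g\}$, from which the criterion is read off. The only difference is cosmetic — the paper works with the $(n-2)\times n$ matrix $E$ and its minors while you use the full $n\times n$ Jacobian and multilinearity — and you additionally make explicit the nonvanishing of some $\{x_i,x_j\}$ (via algebraic independence of the $f_i$) that the paper leaves implicit in the ``only if'' direction.
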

\begin{proof}
For all $g\in A$ and $1\leq i\leq n-2$, $h.\partial_j(g)=h_j^{-1}\partial_j(h.g)$  so,  for $1\leq j\leq n$,
$h.t_i^2\partial_j(s_i/t_i)=h_j^{-1}\sigma_i(h)\tau_i(h)t_i^2\partial_j(s_i/t_i)$. In other words, when $h$ acts on the
$(n-2)\times n$ matrix $E$ whose rows are $t_i^2\nabla(s_i/t_i)$,
the $j$th column gets multiplied by $h_j^{-1}$ and the $i$th row by $\sigma_i(h)\tau_i(h)$.
It follows that, for $1\leq k,\ell\leq n$,
\[h.\{x_k,x_\ell\}=(h_1\dots h_n)^{-1}h_kh_\ell\rho(h)\{x_k,x_\ell\}.\]
As $\{h.x_k,h.x_\ell\}=h_kh_\ell\{x_k,x_\ell\}$, the result follows.
\end{proof}

\begin{examples} Proposition~\ref{whenPauto} is nicely illustrated in Examples~\ref{qmat} and \ref{symm}.
In Example~\ref{qmat}, for $h=(h_1,h_2,h_3,h_1^{-1}h_2h_3)\in H^\prime$, $\sigma_1(h)=h_1h_4=h_2h_3$,
$\sigma_2(h)=h_2$, $\tau_1(h)=1$ and $\tau_2(h)=h_3$  so $\rho(h)=h_1h_2h_3h_4$ for all $h\in H^\prime$.
Here $H^\prime=H$.
In Example~\ref{symm}, for $h=(h_1,h_1,h_1,h_1)\in H^\prime$, $\sigma_1(h)=h_1$,
$\sigma_2(h)=h_1^2$, $\tau_1(h)=1$ and $\tau_2(h)=1$ so,  unless $h_1=1$, $\rho(h)=h_1^3\neq h_1h_2h_3h_4=h_1^4$.
\end{examples}

\begin{lem}\label{constants}
Let $R$ be a commutative noetherian $\C$-algebra that is a domain and let $\delta$ be a $\C$-derivation of $R$. Let $K$ denote the subring of constants, that is
$K=\{r\in R\ :\  \delta(r)=0\}$. Then $K$ is algebraically closed in $R$.
\end{lem}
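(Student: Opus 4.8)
The plan is to show that if $k \in R$ is algebraic over $K$, then $\delta(k) = 0$, so that $k \in K$. Let $k \in R$ and suppose $k$ satisfies a polynomial equation over $K$; choose one
\[
k^m + a_{m-1}k^{m-1} + \dots + a_1 k + a_0 = 0, \qquad a_i \in K,
\]
of minimal degree $m$. I would first observe that $m \geq 1$ is trivial and that the case $m = 1$ gives $k = -a_0 \in K$ directly, so assume $m \geq 1$ and aim to show $\delta(k) = 0$.

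Next I would apply $\delta$ to this relation. Since $\delta$ is a $\C$-derivation and each $a_i \in K$ satisfies $\delta(a_i) = 0$, the Leibniz rule yields
\[
\left( m k^{m-1} + (m-1)a_{m-1}k^{m-2} + \dots + a_1 \right)\delta(k) = 0.
\]
The coefficient of $\delta(k)$ here is a polynomial in $k$ over $K$ of degree $m-1 < m$, with leading coefficient $m \neq 0$ in $\C$ (this is where characteristic zero is used); by minimality of $m$ it is nonzero in $R$. Since $R$ is a domain, it follows that $\delta(k) = 0$, hence $k \in K$. This shows $K$ is algebraically closed in $R$.

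The argument is essentially the classical fact that the constants of a derivation in characteristic zero form a relatively algebraically closed subring, and there is no real obstacle; the one point requiring care is the appeal to minimality of $m$ to conclude that the derived polynomial in $k$ does not vanish in $R$, which is immediate once one notes its degree has dropped and its top coefficient is a nonzero scalar. The noetherian hypothesis on $R$ is not actually needed for this particular statement, but I would keep the statement as given since it suffices for the applications later in the paper.
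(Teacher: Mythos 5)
Your overall strategy --- take a relation of minimal degree, differentiate it, and use minimality together with the domain hypothesis to conclude $\delta(k)=0$ --- is exactly the argument the paper has in mind (its proof is a one-line reference to \cite[Lemma 3.1]{JoOh} together with a stated modification). But there is a genuine gap at the very first step: you take the minimal relation to be \emph{monic}. Since $K$ is only a subring of $R$, not a field (e.g.\ $K=\C[x]\subset\C[x,y]=R$ for $\delta=\partial/\partial y$), an element that is algebraic over $K$ need not satisfy any monic polynomial over $K$, and you cannot divide through by the leading coefficient. As written, your argument only shows that $K$ is \emph{integrally} closed in $R$, which is the statement of \cite[Lemma 3.1]{JoOh} rather than the algebraic closedness asserted here --- and the stronger statement is what the paper actually uses: in the proof of Theorem~\ref{AC}(2) the relevant ring is merely algebraic, not integral, over the constant subring.

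The repair is precisely the modification the paper flags. Take a relation $a_mk^m+a_{m-1}k^{m-1}+\dots+a_0=0$ of minimal degree $m$ with $a_i\in K$ and $a_m\neq 0$ not assumed to equal $1$. Applying $\delta$ gives
\[
\bigl(ma_mk^{m-1}+(m-1)a_{m-1}k^{m-2}+\dots+a_1\bigr)\delta(k)=0,
\]
and the bracketed element is the value at $k$ of a polynomial over $K$ of degree $m-1$ whose leading coefficient $ma_m$ is nonzero because $R$ is a domain containing $\Q$; by minimality of $m$ that value is a nonzero element of $R$, so $\delta(k)=0$ as before. (Your remaining observations are fine: characteristic zero enters exactly where you say, and the noetherian hypothesis is indeed not used.)
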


\begin{proof}
The proof is essentially the same as that of \cite[Lemma 3.1]{JoOh} but with the word \lq algebraic\rq~replacing the word \lq integral\rq~and inserting a leading coefficient $k_n$ that need not be $1$.
\end{proof}

\begin{lem}\label{stP}
Let $P$ be a  proper Poisson prime ideal of $A$. Then $s_i\notin P$ or $t_i\notin P$ for each $i=1,\dots, n-2$.
\end{lem}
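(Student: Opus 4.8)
The plan is to argue by contradiction: suppose that for some index $i$, say $i=1$ after relabelling, we have both $s_1\in P$ and $t_1\in P$. Since $s_1$ and $t_1$ are coprime in $A=\C[x_1,\dots,x_n]$, the ideal $s_1A+t_1A$ has height at least $2$ (a minimal prime over it has height $\geq 2$, as $s_1,t_1$ form a regular sequence away from the trivial case), but more to the point I want to exploit the fact that $t_1^2\partial_j(s_1/t_1)=t_1\partial_j(s_1)-s_1\partial_j(t_1)\in A$ for each $j$. The key observation is that if $s_1\in P$ and $t_1\in P$ then $t_1^2\partial_j(s_1/t_1)=t_1\partial_j(s_1)-s_1\partial_j(t_1)\in P$ for every $j$, i.e. the entire first row $t_1^2\nabla(s_1/t_1)$ of the matrix $E$ of Notation~\ref{PBnotation} lies in $P$. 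Consequently every minor $E_{k\ell}$, being an alternating sum of products each of which uses one entry from the first row, lies in $P$. But $\{x_k,x_\ell\}=(-1)^{k+\ell-1}E_{k\ell}$, so $\{x_k,x_\ell\}\in P$ for all $k,\ell$, which by Definition~\ref{resnullproper} says that $P$ is residually null, contradicting the hypothesis that $P$ is proper.

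The main steps, in order, are: (1) reduce to a single index $i=1$ and assume $s_1,t_1\in P$; (2) write $t_1^2\partial_j(s_1/t_1)$ explicitly as $t_1\partial_j(s_1)-s_1\partial_j(t_1)$ and note this lies in $P$ for every $j$, so the first row of $E$ has all entries in $P$; (3) expand each $(n-2)\times(n-2)$ minor $E_{k\ell}$ along its first row to conclude $E_{k\ell}\in P$; (4) use $\{x_k,x_\ell\}=(-1)^{k+\ell-1}E_{k\ell}$ and the characterization of residually null ideals in terms of a generating set to conclude $P$ is residually null, contradicting properness.

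I do not anticipate a serious obstacle here; the only point requiring a little care is step (2), namely that the relevant entries of $E$ really are polynomial expressions of the stated form, which is exactly the identity $t_i^2\partial_j(s_i t_i^{-1})=t_i\partial_j(s_i)-s_i\partial_j(t_i)$ recorded (implicitly) in the Notation block preceding Example~\ref{qmat}. One might worry whether the argument genuinely needs $n-2\geq 1$ so that there is a first row at all, but this holds since $n\geq 3$. It is also worth remarking that the conclusion does not need $P$ prime: the same computation shows that any residually non-null Poisson ideal fails to contain both $s_i$ and $t_i$; primeness of $P$ is simply the context in which the lemma will be applied.
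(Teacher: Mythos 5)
Your proof is correct and follows essentially the same route as the paper's: both rest on the identity $t_i^2\nabla(s_i/t_i)=t_i\nabla s_i-s_i\nabla t_i\in P$ when $s_i,t_i\in P$, which forces every minor $E_{k\ell}$, and hence every $\{x_k,x_\ell\}$, into $P$, contradicting properness. The paper states this more tersely, leaving the expansion of the minors along the offending row implicit, but the argument is the same.
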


\begin{proof}
If $s_i\in P$ and $t_i\in P$ for some $i$ then $P$ is residually null since
$t_i^2\nabla\frac{s_i}{t_i}=t_i\nabla s_i-s_i\nabla t_i\in P$.
\end{proof}

The proof of our main result, Theorem~\ref{AC}, involves the relationship between transcendence degree and heights of prime ideals.
\begin{notn} Let $K$ be a field, $A$ be an integral domain which is also an affine $K$-algebra, $Q(A)$ be the field of quotients of $A$ and $L$ be a field extension of $K$.
We shall denote the Krull dimension of an affine $K$-algebra $A$ by $\dim(A)$ and the transcendence degree of $L$ by $\trdeg_K(L)$. Following \cite[Chapter 6]{commview} we extend the latter notation to $A$ by taking $\trdeg_K(A)$ to be the number of elements in any maximal algebraically independent set of elements in $A$. By \cite[Corollary 14.29]{Sha}
and \cite[Theorem 6.35]{commview}, $\trdeg_K(Q(A))=\dim(A)=\trdeg_K(A)$. Note also that any algebraically independent set of elements in $A$ can be extended to a maximally algebraically independent set in $A$, see \cite[Example 6.4 and Remark 6.6]{commview}. We shall simply write $\trdeg(A)$ for $\trdeg_K(A)$ if no confusion arises.
By \cite[Corollary 14.32]{Sha},
\[\hgt(P)+\dim(A/P)=\dim(A).\]
\label{trdimht}
\end{notn}

\begin{notn}
Let $\Gamma$ be the set of all sequences $\gamma=((\gamma_1,\delta_1),\dots,(\gamma_{n-2},\delta_{n-2}))$, of length $n-2$, in $\{0,1\}\times\{0,1\}$. Call an element $\gamma$ of $\Gamma$ {\it dense} if, for each $i=1,\dots, n-2$, $(\gamma_i,\delta_i)\neq (0,0)$. To each $\gamma\in \Gamma$, we associate a finite subset $S_\gamma$ of
$\{s_1,\dots,s_{n-2},t_1,\dots,t_{n-2}\}$, a finite subset
$V_\gamma=\{v_1,\dots,v_{n-2}\}$ of $B$, a multiplicatively closed subset $M_\gamma$ of $A$ and a localization $A_\gamma$ of $A$ as follows:
\[
s_i\in S_\gamma\Leftrightarrow\gamma_i=1\text{ and }t_i\in S_\gamma\Leftrightarrow\delta_i=1,\]
\[v_i= \begin{cases} s_i/t_i \text{ if }\delta_i=1\\ t_i/s_i \text{ otherwise},\end{cases}\]
 $M_\gamma$ is the multiplicative closed subset of $A$ generated by the elements of $S_\gamma$, and $A_\gamma$ is the localization $M_\gamma^{-1}A$. Note that if $\gamma$ is dense then $v_i\in A_\gamma$ for each $i=1,\dots,n-2$. In this case, denote by $C_\gamma$ the subalgebra of $A_\gamma$ generated by $v_1,\dots, v_{n-2}$.
Since $s_1/t_1,\dots, s_{n-2}/t_{n-2}$ are algebraically independent over $\C$, the transcendence degree of $C_\gamma$ is $n-2$.

For example, in Example~\ref{qmat}, if $\gamma=\{(0,1),(1,0)\}$ then $S_\gamma=\{t_1,s_2\}=\{1,x_2\}$ and $v_\gamma=\{s_1/t_1,t_2/s_2\}=
\{x_1x_4-x_2x_3,x_3/x_2\}$.
\end{notn}

\begin{notn}
For $P\in\Pspec(A)$, let $\gamma(P)=((\gamma_1,\delta_1),\dots,(\gamma_{n-2},\delta_{n-2}))$ be the sequence such that, for each $i$,
$\gamma_i=0\Leftrightarrow s_i\in P$ and $\delta_i=0\Leftrightarrow t_i\in P$. For example, in Example~\ref{qmat},
if $P=x_1A+x_3A$ then $\gamma(P)=\{(0,1),(1,0)\}$, $S_{\gamma(P)}=\{s_2,t_1\}=\{x_2,1\}$ and $V_{\gamma(P)}=\{s_1/t_1,t_2/s_2\}=
\{x_1x_4-x_2x_3,x_3/x_2\}$.
\end{notn}

The next lemma amounts to observing some restrictions on $\gamma(P)$.
\begin{lem}\label{restrictions}
Let $P$ be a Poisson prime ideal of $A$.
\begin{enumerate}
\item If $P$ is proper then $\gamma(P)$ is dense.
\item If $t_i=1$ for some $i$ then, in $\gamma(P)$, $\delta_i=1$ and, in $V_{\gamma(P)}$, $v_i=s_i$.
\end{enumerate}
\end{lem}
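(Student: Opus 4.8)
The plan is to read off both assertions directly from the definition of $\gamma(P)$, using Lemma~\ref{stP} for part (1) and the fact that a prime ideal is proper for part (2). There is no real obstacle: the lemma is purely a bookkeeping observation recording what the hypotheses say about the pattern $\gamma(P)$, and the "hard part" is merely keeping the conventions in the definitions of $\gamma(P)$ and $V_{\gamma(P)}$ straight.

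For part (1), I would first unwind the meaning of \emph{dense}: $\gamma(P)$ is dense precisely when $(\gamma_i,\delta_i)\neq(0,0)$ for every $i=1,\dots,n-2$. By the defining equivalences $\gamma_i=0\Leftrightarrow s_i\in P$ and $\delta_i=0\Leftrightarrow t_i\in P$, this says exactly that for each $i$ it is not the case that both $s_i\in P$ and $t_i\in P$. Since $P$ is a proper Poisson prime ideal, Lemma~\ref{stP} gives precisely this: $s_i\notin P$ or $t_i\notin P$ for each $i$, so $\gamma_i=1$ or $\delta_i=1$ and hence $(\gamma_i,\delta_i)\neq(0,0)$. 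Thus $\gamma(P)$ is dense.

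For part (2), suppose $t_i=1$ for some $i$. A prime ideal of $A$ is by definition a proper ideal, so $1\notin P$; that is, $t_i\notin P$, and therefore $\delta_i=1$ in $\gamma(P)$. By the definition of $V_{\gamma(P)}$, the case $\delta_i=1$ yields $v_i=s_i/t_i$, and since $t_i=1$ this is simply $s_i$. I would note in passing that here we only use that $P$ is proper as an ordinary ideal, not that it is a proper Poisson ideal in the sense of Definition~\ref{resnullproper}; part (1), by contrast, genuinely needs $P$ to be a proper Poisson prime ideal so that Lemma~\ref{stP} applies.
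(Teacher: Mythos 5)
Your proof is correct and follows the paper's own argument exactly: part (1) is an immediate application of Lemma~\ref{stP}, and part (2) follows from $t_i=1\notin P$ together with the definition of $V_{\gamma(P)}$. Your side remark distinguishing "proper as an ideal" from "proper Poisson ideal" is a sensible clarification but does not change the argument.
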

\begin{proof}
(1) holds because, by Lemma~\ref{stP}, we cannot have $s_i\in P$ and $t_i\in P$ for any $i$ and (2) holds because
$t_i\notin P$.
\end{proof}

\begin{rem}\label{sometone}
The converse to (1) is false as can be seen from Example~\ref{qmat} where, for the residually null Poisson prime ideal $P=x_1A+x_2A+x_4A$, $\gamma(P)=\{(0,1),(0,1)\}$ is dense.
\end{rem}

\begin{notn}\label{strategy}
 The Poisson spectrum  $\Pspec A$ can be partitioned using $\Gamma$. For $\gamma\in \Gamma$, let \[\Pspec_\gamma A=\{P\in \Pspec A |S_\gamma=S_{\gamma(P)}\}.\]
The set $\Pspec_\gamma A$ may be empty. Indeed, by Lemma~\ref{restrictions}(2), if $t_i=1$ for some
$i$ then  $\Pspec_\gamma A=\varnothing$ whenever $\delta_i=0$.

Our strategy in attempting to understand $\Pspec A$ is based on the following:
\begin{enumerate}
\item $\Pspec A$ is the disjoint union of the subsets $\Pspec_\gamma A$ taken over $\gamma\in \Gamma$.
\item By standard localization theory, if $P\in \Pspec_\gamma A$ then $PA_{\gamma}$ is a Poisson prime ideal of $A_{\gamma}$ and $P=A\cap PA_{\gamma}$.
\item When $\gamma$ is dense, Theorem~\ref{AC} below determines the Poisson prime ideals of $A_{\gamma}$ in terms of prime ideals of the polynomial algebra $C_{\gamma}$.
\item When $\gamma$ is not dense, every Poisson prime ideal in $\Pspec_\gamma A$ is residually null.
\end{enumerate}
 \end{notn}

The next result determines the proper Poisson prime ideals in $A_\gamma$ when $\gamma$ is dense.

\begin{thm}\label{AC}
Let $\gamma\in\Gamma$ be dense.
\begin{enumerate}
\item Let $I$ be an ideal of $C_\gamma$ and let  $Q$ be a  prime ideal of $A_\gamma$ that is minimal over $IA_\gamma$. Then $Q$ is a Poisson prime ideal
of $A_\gamma$.
\item
If $Q$ is a nonzero proper Poisson prime ideal of $A_\gamma$ then $\hgt(Q)=\hgt(Q\cap C_\gamma)$.
\item
If $Q$ is a nonzero proper Poisson prime ideal of $A_\gamma$ then $Q$ is a minimal prime ideal over $(Q\cap C_\gamma)A_\gamma$.
\end{enumerate}
\end{thm}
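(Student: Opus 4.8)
The plan is to reduce all three parts to one transcendence-degree identity, using the Poisson structure for one inequality and a Jacobian/implicit-function argument for the matching one. Write $v_i=\phi_i(f_i)$, where $\phi_i(T)=T$ if $\delta_i=1$ and $\phi_i(T)=T^{-1}$ if $\delta_i=0$, so that $\nabla v_i=\phi_i'(f_i)\nabla f_i$ with $\phi_i'(f_i)\neq 0$. Substituting the $v_i$ for the $f_i$ inside the Jacobian in \eqref{definebracket} gives, for $f,g\in A_\gamma$,
\[\{f,g\}=w\,\Jac(f,g,v_1,\dots,v_{n-2}),\qquad w=\frac{a}{\phi_1'(f_1)\cdots\phi_{n-2}'(f_{n-2})},\]
and $w\in A_\gamma^{\times}$ because dividing the factor $t_i^2$ of $a$ by $\phi_i'(f_i)$ leaves $t_i^2$ when $\delta_i=1$ (a unit of $A_\gamma$, as $t_i\in S_\gamma$) and $-s_i^2$ when $\delta_i=0$ (a unit, as then $\gamma_i=1$ and $s_i\in S_\gamma$). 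Taking $f=v_i$ makes two rows of the Jacobian matrix proportional, so $v_i\in\Pz(A_\gamma)$ and $C_\gamma\subseteq\Pz(A_\gamma)$; hence $IA_\gamma$ is a Poisson ideal for every ideal $I$ of $C_\gamma$, and part (1) follows from the standard fact that the minimal primes over a Poisson ideal of a Noetherian Poisson $\C$-algebra are Poisson prime. Taking $f=x_i,g=x_j$ exhibits $\{x_i,x_j\}$ as $\pm w$ times the minor of $V:=(\partial_jv_i)_{i,j}$ obtained by deleting columns $i$ and $j$; so a prime $Q$ of $A_\gamma$ is proper if and only if $V$ has rank $n-2$ over $\kappa(Q):=Q(A_\gamma/Q)$.

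\textbf{Reduction to a transcendence degree, and the two inequalities.} Let $Q$ be a nonzero proper Poisson prime of $A_\gamma$, $P=Q\cap A$, $Q_0=Q\cap C_\gamma$, and let overbars denote images in $\kappa(Q)$; note $\C(\bar v_1,\dots,\bar v_{n-2})=Q(C_\gamma/Q_0)\subseteq\kappa(Q)$. By localization and the facts collected in Notation~\ref{trdimht}, $\hgt Q=\hgt P=n-\trdeg_{\C}\kappa(Q)$ and $\hgt Q_0=(n-2)-\trdeg_{\C}\C(\bar v_1,\dots,\bar v_{n-2})$, so additivity of transcendence degree in $\C\subseteq\C(\bar v_1,\dots,\bar v_{n-2})\subseteq\kappa(Q)$ gives
\[\hgt Q-\hgt Q_0=2-\trdeg_{\C(\bar v_1,\dots,\bar v_{n-2})}\kappa(Q).\]
Thus part (2) is equivalent to $\trdeg_{\C(\bar v_1,\dots,\bar v_{n-2})}\kappa(Q)=2$. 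For $\geq2$: $\Pz(\kappa(Q))$ contains every $\bar v_i$ and, being an intersection of subfields of constants of derivations of $\kappa(Q)$, is algebraically closed in $\kappa(Q)$ by Lemma~\ref{constants}; since $Q$ is proper there are $a,b\in\kappa(Q)$ with $\{a,b\}\neq0$, and the minimal-polynomial argument of Lemma~\ref{constants}, applied via the Hamiltonians $\{-,a\}$ and $\{-,b\}$, shows $a,b$ are algebraically independent over $\Pz(\kappa(Q))\supseteq\C(\bar v_1,\dots,\bar v_{n-2})$. For $\leq2$: properness lets us pick $n-2$ columns $J$ with $V_J$ invertible over $\kappa(Q)$; put $I=\{1,\dots,n\}\setminus J$, so $|I|=2$. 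The polynomial $F_i:=s_i-v_it_i$ (respectively $t_i-v_is_i$, according to $\delta_i$), viewed in $\C[x_1,\dots,x_n,v_i]$, vanishes at $(\bar x_1,\dots,\bar x_n,\bar v_i)$, and for $j\in J$ the reduction modulo $Q$ of $\partial F_i/\partial x_j$ is a nonzero scalar multiple (by $\bar t_i$, respectively $\bar s_i$) of the $(i,j)$ entry of $V$; hence $\big(\overline{\partial F_i/\partial x_j}\big)_{i,\,j\in J}$ is invertible over $\kappa(Q)$. Since $\bar v_i=\bar s_i/\bar t_i\in\C(\bar x_1,\dots,\bar x_n)$ (resp.\ $\bar t_i/\bar s_i$), we have $\kappa(Q)=L(\bar x_j:j\in J)$ with $L:=\C(\bar v_1,\dots,\bar v_{n-2},\,\bar x_i:i\in I)$. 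If some $\bar x_j$, $j\in J$, were transcendental over $L$ there would be a nonzero $L$-derivation $D$ of $\kappa(Q)$; applying $D$ to the relations $F_i=0$ and using $D\bar v_i=0$ and $D\bar x_i=0$ for $i\in I$ gives $\big(\overline{\partial F_i/\partial x_j}\big)_{i,\,j\in J}\,(D\bar x_j)_{j\in J}=0$, forcing $D\bar x_j=0$ for all $j\in J$ and hence $D=0$, a contradiction. So $\kappa(Q)$ is algebraic over $L$ and $\trdeg_{\C(\bar v_1,\dots,\bar v_{n-2})}\kappa(Q)=\trdeg_{\C(\bar v_1,\dots,\bar v_{n-2})}L\leq|I|=2$, proving (2).

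\textbf{Minimality (part (3)).} Suppose $Q$ is not minimal over $Q_0A_\gamma$ and pick a minimal prime $\mathfrak m$ over $Q_0A_\gamma$ with $\mathfrak m\subsetneq Q$. Then $Q_0\subseteq\mathfrak m\cap C_\gamma\subseteq Q\cap C_\gamma=Q_0$, so $\mathfrak m\cap C_\gamma=Q_0$; by part (1), $\mathfrak m$ is a Poisson prime, and it is proper because $\mathfrak m\subseteq Q$ would otherwise force all $\{x_i,x_j\}\in Q$, making $Q$ residually null. Applying part (2) to $\mathfrak m$ and to $Q$ yields $\hgt\mathfrak m=\hgt Q_0=\hgt Q$, contradicting $\mathfrak m\subsetneq Q$. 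Hence $Q$ is a minimal prime over $(Q\cap C_\gamma)A_\gamma$.

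\textbf{Main obstacle.} The delicate point is the inequality $\trdeg_{\C(\bar v_1,\dots,\bar v_{n-2})}\kappa(Q)\leq2$: one must turn properness into an invertible maximal minor of $V$ over the residue field, convert this into invertibility of a genuine polynomial Jacobian reduced modulo $Q$ (handling both shapes of $v_i$), and then run the derivation argument over the correct ground field $L$. Checking along the way that $w$ and the relevant reductions of $\partial F_i/\partial x_j$ are units — equivalently, that nothing one divides by lands in $Q$ — is exactly where the coprimality of $s_i,t_i$ and the construction of $S_\gamma$ are used.
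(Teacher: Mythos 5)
Your proof is correct, and its overall skeleton matches the paper's: part (1) is the same argument (Poisson-centrality of $C_\gamma$, here obtained by rewriting the bracket as a unit times $\Jac(f,g,v_1,\dots,v_{n-2})$, followed by the minimal-primes-over-Poisson-ideals fact), and part (3) is the same height comparison. The genuine divergence is in part (2), where both you and the paper reduce to showing $\trdeg_{Q(C_\gamma/Q\cap C_\gamma)}Q(A_\gamma/Q)=2$ but establish the two inequalities by different means. For the inequality $\leq 2$ (equivalently $\hgt Q\geq\hgt(Q\cap C_\gamma)$) the paper uses Noether normalization and extension of algebraically independent sets, an argument valid for \emph{every} prime $Q$; you instead use properness to extract an invertible $(n-2)\times(n-2)$ minor of $(\partial_j v_i)$ over the residue field and run a Jacobian/derivation argument on the relations $s_i-v_it_i=0$ (resp. $t_i-v_is_i=0$). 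This is more hands-on, makes visible why the fibres of $(v_1,\dots,v_{n-2})$ are two-dimensional exactly where the bracket is nondegenerate, and correctly tracks where density of $\gamma$ and coprimality enter (the units $w$, $\bar t_i$, $\bar s_i$) --- but it only applies to proper primes, which suffices here. For the inequality $\geq 2$ both arguments rest on Lemma~\ref{constants}: the paper argues by contradiction that $\trdeg\leq 1$ would force the bracket on a suitable localization of $A_\gamma/Q$ to be trivial, whereas you argue directly that any $a,b$ with $\{a,b\}\neq 0$ are algebraically independent over $\Pz(Q(A_\gamma/Q))$, which contains $Q(C_\gamma/Q\cap C_\gamma)$ and is algebraically closed in $Q(A_\gamma/Q)$ --- a mild streamlining of the same idea. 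One small point you should make explicit in part (3): part (2) applied to $Q$ gives $\hgt(Q\cap C_\gamma)=\hgt Q\geq 1$, so $Q\cap C_\gamma\neq 0$ and the minimal prime $\mathfrak{m}$ over $(Q\cap C_\gamma)A_\gamma$ is nonzero, which is needed before part (2) can legitimately be applied to $\mathfrak{m}$.
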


\begin{proof}
(1) Since
$v_i= s_i/t_i$ or $v_i=t_i/s_i$ and $\nabla\frac{t_i}{s_i}=-s_i^{-2}(t_i^2\nabla\frac{s_i}{t_i})$, the subalgebra $C_\gamma$ is contained in the Poisson centre of $A_\gamma$. Hence $IA_\gamma$ is a Poisson ideal of $A_\gamma$ and,  by \cite[1.4]{Oh10}, every prime ideal of $A_\gamma$ minimal over $IA_\gamma$ is Poisson.

(2)
By Noether's Normalization Theorem, as stated in \cite[14.14]{Sha}, there exist non-negative integers $m,d$, with
$d\leq m$, and $y_1,\dots, y_m\in C_\gamma$ such that
$y_1,\dots, y_m$ are algebraically independent over $\C$, $C_\gamma$ is integral over $\C[y_1,\dots, y_m]$ and
$(Q\cap C_\gamma)\cap \C[y_1,\dots, y_m]=\sum_{i=d+1}^{m}\C[y_1,\dots, y_m]y_i$. Note that $m=\trdeg_\C(C_\gamma)=n-2$ and $d=\trdeg_\C(C_\gamma/Q\cap C_\gamma)$. The algebraically independent subset $\{y_1,\dots, y_{n-2}\}$ of $A_\gamma$ can be extended to a maximal algebraically independent subset $\{y_1,\dots, y_{n-2},z_1, z_2\}$ of $A_\gamma$. Thus $A_\gamma$
is algebraic over $\C[y_1,\dots, y_{n-2}, z_1, z_2]$.
As $y_{d+1}, \dots, y_{n-2}\in Q$, $A_\gamma/Q$ is algebraic over $\C[y_1+Q,\dots, y_{d}+Q, z_1+Q, z_2+Q]$.  It follows that
\[\trdeg(A_\gamma/Q)\leq d+2=\trdeg(C_\gamma/Q\cap C_\gamma)+2,\]
and, from Notation \ref{trdimht}, that
\begin{equation}\label{height}
\hgt(Q)\geq \hgt(Q\cap C_\gamma).
\end{equation}
Now suppose that $\hgt(Q)> \hgt(Q\cap C_\gamma)$. Then
$\trdeg(A_\gamma/Q)\leq \trdeg(C_\gamma/Q\cap C_\gamma)+1$. Let $T$ be the set of all nonzero elements of the integral domain $C_\gamma/Q\cap C_\gamma$
and let $K=T^{-1}(C_\gamma/Q\cap C_\gamma)$ be the quotient field of $C_\gamma/Q\cap C_\gamma$.
 Thus $T^{-1}(A_\gamma/Q)$ is an affine $K$-algebra.
Let $L$ be the quotient field of $A_\gamma/Q$ which is also the quotient field of $T^{-1}(A_\gamma/Q)$.
Then $\trdeg_\C(L)\leq \trdeg_\C(K)+1$ and, by \cite[12.56]{Sha}, $\trdeg_K L\leq 1$. Hence there exists $w\in T^{-1}(A_\gamma/Q)$ such that
$T^{-1}(A_\gamma/Q)$ is algebraic over $K[w]$. Moreover $C_\gamma$ is contained in the Poisson centre of $A_\gamma$, whence the Poisson bracket in $K[w]$ is trivial.
The constant subring of the hamiltonian $\ham w$ contains  $K[w]$ and,  by Lemma~\ref{constants}, it contains $T^{-1}(A_\gamma/Q)$. Hence, for any $b\in T^{-1}(A_\gamma/Q)$, the constant subring of
$\ham b$ contains $K[w]$ and, again by Lemma~\ref{constants}, $\ham b=0$. Thus
the  Poisson bracket in $T^{-1}(A_\gamma/Q)$ is trivial, which is
impossible since $Q$ is proper. Therefore we must have equality in \eqref{height}, that is,
$\hgt(Q)=\hgt(Q\cap C_\gamma).$

(3) Let $Q^\prime$ be a minimal prime ideal of
$(Q\cap C_\gamma) A_\gamma$ such that $Q^\prime\subseteq Q$. Then $Q^\prime$ is Poisson prime, by (1), and is proper. Suppose that $Q^\prime\neq Q$. Then $\hgt(Q^\prime)<\hgt(Q)$ and \[\hgt(Q^\prime)=\hgt(Q^\prime\cap C_\gamma)=\hgt(Q\cap C_\gamma)=\hgt(Q),\] a clear contradiction.
\end{proof}

\begin{rem}
Let $P$ be a proper Poisson prime ideal of $A$ and let $\gamma=\gamma(P)$. By Theorem~\ref{AC}, $PA_\gamma$ is a minimal prime ideal over $PA_\gamma\cap C_\gamma$ and
$\hgt(P)=\hgt(PA_\gamma)=\hgt (PA_\gamma\cap C_\gamma)$. Denote by $\Phgt(P)$ the maximal length $\ell$ of a chain of distinct Poisson prime ideals
\[0=P_0\subset P_1\subset P_2\subset\dots\subset P_\ell=P\]
of $A$. Clearly  $\Phgt(P)\leq \hgt(P)$. But, if $j=\hgt P=\hgt PA_\gamma=\hgt(PA_\gamma\cap C_\gamma)$ then a chain of prime ideals of $C_\gamma$ of length $j$ inside $PA_\gamma\cap C_\gamma$ gives rise to chain of Poisson prime ideals of $A$ of length $j$ inside $P$ so
$\Phgt(P)\geq \hgt(PA_\gamma\cap C_\gamma)=\hgt(P)$, whence $\Phgt(P)=\hgt(P)$. There are many examples of residually null Poisson prime ideals $P$ for which $\Phgt(P)<\hgt(P)$, for example the symmetric algebra of $\mathfrak{sl}_2$ with the Kirillov-Kostant-Souriau bracket, where the unique Poisson maximal ideal $M$ has $\Phgt(M)=2$ and $\hgt(M)=3$, see \cite[Example 4.1]{JoOh}. For an example where $\Phgt(P)<\hgt(P)$ and $P$ is not residually prime, see \cite[Remark 5.13]{PoissonOre}.
\end{rem}

\begin{example}\label{qmat2}
To illustrate Theorem~\ref{AC} and the strategy outlined in Notation \ref{strategy}, we return to Example~\ref{qmat} and describe all Poisson prime ideals. It is easy to see that
the residually null Poisson prime ideals in this case are the height two prime ideal $x_2A+x_3A$, the height three prime ideals $x_1A+x_2A+x_4A$ and $x_1A+x_3A+x_4A$ and all prime ideals containing one, or more, of these.

Let $D=x_1x_4-x_2x_3=s_1$, the determinant.
The dense subsets of $\Gamma$ for which $\Pspec_\gamma A$ can be non-empty and the corresponding sets $S_\gamma$ and $V_\gamma$ are:
\begin{align*}
\gamma_1&:=\{(0,1),(0,1)\},& S_{\gamma_1}&=\{1,x_3\},& V_{\gamma_1}&=\{D,x_2/x_3\},\\
\gamma_2&:=\{(0,1),(1,1)\},& S_{\gamma_2}&=\{x_2,1,x_3\},& V_{\gamma_2}&=\{D,x_2/x_3\},\\
\gamma_3&:=\{(1,1),(0,1)\},& S_{\gamma_3}&=\{D,1,x_3\},& V_{\gamma_3}&=\{D,x_2/x_3\},\\
\gamma_4&:=\{(1,1),(1,1)\},& S_{\gamma_4}&=\{D,x_2,1,x_3\},& V_{\gamma_4}&=\{D,x_2/x_3\},\\
\gamma_5&:=\{(1,1),(1,0)\},& S_{\gamma_5}&=\{D,x_2,1\},& V_{\gamma_5}&=\{D,x_3/x_2\},\\
\gamma_6&:=\{(0,1),(1,0)\},& S_{\gamma_6}&=\{x_2,1\},& V_{\gamma_6}&=\{D,x_3/x_2\}.
\end{align*}

Consequently $C_\gamma=C_1:=\C[D,x_2/x_3]$ or $C_\gamma=C_2:=\C[D,x_3/x_2]$.

Let $P$ be a proper Poisson prime ideal of $A$ and let $\gamma=\gamma(P)$. Then $PA_{\gamma}$ is minimal over
$PA_{\gamma}\cap C_{\gamma}$. Suppose that  $C_{\gamma}=C_1$ so that
$\gamma=\gamma_i$ for some $i$ with $1\leq i\leq 4$.  The prime ideals of $C_1$ are $0$, the principal ideals $fC_1$, where $f$ is irreducible in $C_1$ and
the maximal ideals $(D-\lambda)C_1+(x-\mu)C_1$, where $x=x_2/x_3$.

If $i=1$ then $D,x_2\in P$ and thus  $PA_{\gamma}\cap C_1=xC_1+DC_1$  so  $P$ is minimal over $x_2A+DA=x_2A+x_1x_4A$. It follows that  $P=x_2A+x_1A$ or $P=x_2A+x_4A$. In both cases $A/P\simeq \C[y,z]$ with $\{y,z\}=yz$.

If $i=2$ then $D\in P$, and
$PA_{\gamma}\cap C_1=DC_1$ or $PA_{\gamma}\cap C_1=DC_1+(x-\lambda)C_1$ for some non-zero $\lambda\in \C$. In this case either $P=DA$ or $P=DA+(x_2-\lambda x_3)A$. In the latter case, $A/P$ is isomorphic to
$\C[x_1,x_3,x_4]/(x_1x_4-\lambda x_3^2)$ with the bracket induced by the Poisson bracket on $\C[x_1,x_3,x_4]$ such that $\{f,g\}=x_3 \Jac(f,g,x_1x_4-\lambda x_3^2)$ for all $f,g\in \C[x_1,x_3,x_4]$. It is easy to see, using \cite[Theorem 3.8]{JoOh}, that the non-zero Poisson prime ideals of $A/P$ are residually null.

If $i=3$ then $PA_{\gamma}\cap C_1=xC_1$ or $PA_{\gamma}\cap C_1=xC_1+(D-\lambda)C_1$ for some non-zero $\lambda\in \C$ so either $P=x_2A$ or $P=x_2A+(D-\lambda)A$ and, in the latter case, $A/P$ is isomorphic to
$\C[x_1^{\pm 1},x_3]$ with $\{x_1,x_3\}=x_1x_3$.

If $i=4$ then $PA_{\gamma}\cap C_1=0$ or $PA_{\gamma}\cap C_1=fC_1$, for some irreducible $f\in C_1$ that is not
an associate of $D$ or $x$,  or $PA_{\gamma}\cap C_1=(x-\mu)C_1+(D-\lambda)C_1$ for some non-zero $\mu,\lambda\in \C$. In the third of these cases, $P=(x_2-\mu x_3)A+(D-\lambda)A$ and
$A/P\simeq \C[x_1,x_3,x_4]/(x_1x_4-\mu x_3^2-\lambda)$. In the second case, $f$ remains irreducible
in the polynomial extensions $\C[x,D,x_3]=\C[x,x_1x_4,x_3]$ and $\C[x,x_1x_4,x_3,x_1]$ and in the localization $T$ of the latter at the multiplicatively closed subset generated by $x_1$ and $S_\gamma$. It follows that $f$ is irreducible in $A_{\gamma}$ since $A_{\gamma}$ is a subalgebra of $T$. Hence  if $j$ is the minimal non-negative integer such that $fx_3^j\in A$ and $g=fx_3^j$
then $g$ is irreducible in $A$ and $P=fA_{\gamma}\cap A=gA$. Examples of Poisson prime ideals arising in this way include the principal ideals generated by $g_0=D-\lambda$, $g_1=(x_2-\lambda x_3)=x_3(x-\lambda)$, where $\lambda\in \C\backslash\{0\}$, $g_2=x_1x_3x_4-x_2x_3^2+x_2=
Dx_3+x_2=(D+x)x_3$, $g_3=x_1x_2x_4-x_2^2x_3+x_3=Dx_2+x_3=
(xD+1)x_3$, $g_4=(D^2-x^3)x_3^3=D^2x_3^3-x_2^3$ and $g_5=D^2x_2^3-x_3^3=x_3^3(D^2x^3-1)$. Here the pairs  $g_2, g_3$ and $g_4,g_5$ show how the choice of $v_2$, which is not symmetric between $x_2$ and $x_3$, takes account of the inherent symmetry between $x_2$ and $x_3$. In general, if $f(D,x^{-1})$ is irreducible in $C_2$, where $x^{-1}=x_3x_2^{-1}$, then there is
an irreducible polynomial $g(D,x)$ such that $g(D,x)=x^k f(D,x^{-1})$ for some $k\geq 0$.

The symmetry between $x_2$ and $x_3$ is more explicit in the analysis for $\gamma_5$ and $\gamma_6$,
which are analogous to $\gamma_3$ and $\gamma_1$ respectively. Here the Poisson prime ideals  are $P=x_3A$ or $P=x_3A+(D-\lambda)A$, $\lambda\in \C\backslash\{0\}$, for $\gamma_5$, and $P=x_3A+x_1A$ or $P=x_3A+x_4A$ for $\gamma_6$.
\end{example}

In the case where $t_1=t_2=\dots=t_{n-2}=1$, if $\gamma$ is such that $\Pspec A_\gamma$ is non-empty, then, by Lemma~\ref{restrictions},
each $\delta_i=1$, each $v_i=s_i$ and  $C_\gamma=\C[s_1,s_2,\dots,s_{n-2}]$ which, under our working assumption,
is a polynomial subalgebra of $A$.

\begin{cor}\label{tallone}
Suppose that $t_1=t_2=\dots=t_{n-2}=1$, let $C=\C[s_1,s_2,\dots,s_{n-2}]$ and let $P$ be a proper Poisson prime ideal of $A$. Then there exists a prime ideal $Q$ of $C$ such that $P$ is a minimal prime ideal over $QA$.
\end{cor}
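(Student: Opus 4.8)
The plan is to reduce to Theorem~\ref{AC} by passing to the localization $A_\gamma$ with $\gamma=\gamma(P)$, and then transfer the conclusion back to $A$ by standard contraction. I would first dispose of the trivial case $P=0$ by taking $Q=0$, a prime ideal of the domain $C$, for which $P=0$ is minimal over $0\cdot A$; so assume $P\neq0$.

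Since $P$ is proper, Lemma~\ref{restrictions}(1) shows that $\gamma:=\gamma(P)$ is dense, and since each $t_i=1\notin P$, Lemma~\ref{restrictions}(2) shows that $\delta_i=1$ and $v_i=s_i$ for all $i$, so $C_\gamma=\C[s_1,\dots,s_{n-2}]=C$, as was already observed before the statement. Because $\gamma_i=1\iff s_i\notin P$, the generators of $M_\gamma$ avoid $P$, hence $M_\gamma\cap P=\varnothing$; by Notation~\ref{strategy}(2), $PA_\gamma$ is then a Poisson prime ideal of $A_\gamma$ with $A\cap PA_\gamma=P$. I would check that $PA_\gamma$ is nonzero (else $P=A\cap PA_\gamma=0$) and proper (choosing $f,g\in A$ with $\{f,g\}\notin P$ gives $\{f,g\}\notin PA_\gamma$, so the induced bracket on $A_\gamma/PA_\gamma$ is nonzero), so that Theorem~\ref{AC}(3) applies: $PA_\gamma$ is a minimal prime ideal over $(PA_\gamma\cap C_\gamma)A_\gamma=QA_\gamma$, where I set $Q:=PA_\gamma\cap C_\gamma=PA_\gamma\cap C$. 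Note that $Q=P\cap C$, since $c\in C\cap PA_\gamma$ forces $c\in A\cap PA_\gamma=P$.

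It then remains to descend minimality from $A_\gamma$ to $A$. From $Q=P\cap C\subseteq P$ we get $QA\subseteq P$. If $P'$ is a prime ideal of $A$ with $QA\subseteq P'\subseteq P$, then $M_\gamma\cap P'\subseteq M_\gamma\cap P=\varnothing$, so $P'A_\gamma$ is a prime ideal of $A_\gamma$ with $A\cap P'A_\gamma=P'$ and $QA_\gamma\subseteq P'A_\gamma\subseteq PA_\gamma$; minimality of $PA_\gamma$ over $QA_\gamma$ then forces $P'A_\gamma=PA_\gamma$, whence $P'=A\cap P'A_\gamma=P$. Thus $P$ is a minimal prime over $QA$, as required. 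The substantive work is entirely contained in Theorem~\ref{AC}; the only point needing care is the compatibility of the various localizations and contractions --- in particular that an intermediate prime $P'$ is again disjoint from $M_\gamma$, so that the order-isomorphism between primes of $A$ missing $M_\gamma$ and primes of $A_\gamma$ may be used --- and I do not anticipate a genuine obstacle there.
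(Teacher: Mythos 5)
Your proposal is correct and follows the same route as the paper: localize at $M_{\gamma(P)}$, observe $C_\gamma=C$, apply Theorem~\ref{AC}(3) to $PA_\gamma$, and contract back to $A$. The paper compresses the descent step to ``it follows easily''; you have merely supplied those routine details (disjointness of $P'$ from $M_\gamma$, the identification $Q=P\cap C$, and the trivial case $P=0$), all of which check out.
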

\begin{proof}
Let $\gamma=\gamma(P)$. In this case, $C\subseteq A \subseteq A_\gamma$. By Theorem~\ref{AC}, $PA_\gamma$ is a minimal prime of $A_\gamma$ over $PA_\gamma\cap C$ and it follows easily that $P$ is a minimal prime of $A$ over $(P\cap C)A$.
\end{proof}

\begin{example}\label{x2x3}
In Example~\ref{qmat2}, each irreducible factor $f$ of $C_1$ leads to a single principal Poisson prime ideal. This is not always the case. 
For example, consider the case where $n=4$, $s_1=x_1x_4-x_2x_3$, $s_2=x_2x_3$ and $t_1=t_2=1$. Thus Corollary~\ref{tallone} applies. Note that $s_2$ and $s_1+s_2$ are irreducible in $C$ but not in $A$. This gives rise to four height one Poisson prime ideals of $A$, $x_iA$ for $1\leq i\leq 4$. 
\end{example}

\begin{example}\label{s} This example illustrates the situation where
$A$ has an element of the form $\lambda_i s_i-\mu_i t_i$ for two different values of $i$.
This gives rise to residually null Poisson prime ideals.
Let $n=4$ and let $s_1=x_1+x_2+x_3+x_4$, $t_1=1$, $s_2=x_1+x_4$ and $t_2=x_2+x_3$.
Then $s_1=\lambda_1 s_1-\mu_1 t_1=\lambda_2 s_2-\mu_2 t_2$, where $\lambda_1=\lambda_2=1$,
 $\mu_1=0$ and $\mu_2=-1$.
 The Poisson bracket on $A$ in this example is given by
\[\begin{aligned}
\{x_1,x_2\}&=s_1, &\{x_1,x_3\}&=-s_1,&\{x_1,x_4\}&=0, \\
\{x_2,x_3\}&=0, &\{x_2,x_4\}&=s_1,&\{x_3,x_4\}&=-s_1.
\end{aligned}
\]
Here the height one prime ideal $P=s_1A$ is residually null Poisson and $\gamma(P)=((0,1),(1,1))$ is dense. Notice that $PA_\gamma\cap C_\gamma$ contains both $v_1=s_1$
and $v_2+1=(x_2+x_3)^{-1}s_1$ and that $\hgt(PA_\gamma\cap C_\gamma)=2$ whereas $\hgt(PA_\gamma)=1$.
\end{example}

\section{Poisson primitive spectra}

\begin{notn}\label{Ppnot}
For each $i=1,\dots, n-2$ and for each $(\lambda_i,\mu_i)\in\C^2\backslash\{(0,0)\}$, set \[f^i_{\lambda_i,\mu_i}=\lambda_i s_i-\mu_i t_i.\]  Observe that
\begin{equation}\label{t2nabla}
t_i^2\nabla\frac{s_i}{t_i}=\begin{cases}\lambda_i^{-1}(t_i\nabla f^i_{\lambda_i,\mu_i}-f^i_{\lambda_i,\mu_i}\nabla t_i) \text{ if }\lambda_i\neq0,\\
\mu_i^{-1}(s_i\nabla f^i_{\lambda_i,\mu_i}-f^i_{\lambda_i,\mu_i}\nabla s_i) \text{ if }\mu_i\neq0.\end{cases}
\end{equation}
\end{notn}

\begin{lem}\label{MpPoisson}
For $p=(\alpha_1,\alpha_2, \dots, \alpha_n)\in\C^n$, let $M_p=(x_1-\alpha_1)A+(x_2-\alpha_2)A+\dots+(x_n-\alpha_n)A$
and let $g_i=f_{t_i(p), s_i(p)}^i$, $1\leq i\leq n-2$. Then $M_p$ is
a Poisson ideal  if and only if one of the following conditions holds:
\begin{enumerate}  \item $p$ is a common zero of $s_i$ and $t_i$ for some $i$;
\item   $g_1,\dots,g_{n-2}$ are algebraically dependent over $\C$;
\item
$p$ is a singular point of the affine variety determined by $g_1,\dots,g_{n-2}$.
\end{enumerate}
\end{lem}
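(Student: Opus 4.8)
The plan is to reduce the question of whether $M_p$ is Poisson to a condition on the matrix $E$ evaluated at $p$. Since $M_p$ is a maximal ideal, it is a Poisson ideal precisely when $\{x_k,x_\ell\}\in M_p$ for all $k,\ell$, i.e. when $\{x_k,x_\ell\}(p)=0$ for all $1\le k<\ell\le n$. By Notation~\ref{PBnotation}, $\{x_k,x_\ell\}=(-1)^{k+\ell-1}E_{k\ell}$, where $E$ has $i$th row $t_i^2\nabla(s_i/t_i)$, so $M_p$ is Poisson if and only if every $(n-2)\times(n-2)$ minor of $E(p)$ vanishes, that is, $\operatorname{rank} E(p)\le n-3$. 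So the whole lemma becomes: $\operatorname{rank}E(p)\le n-3$ if and only if (1), (2), or (3) holds.

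First I would dispose of case (1): if $p$ is a common zero of $s_i$ and $t_i$ for some $i$, then by Lemma~\ref{stP}'s computation $t_i^2\nabla(s_i/t_i)=t_i\nabla s_i-s_i\nabla t_i$ vanishes at $p$, so the $i$th row of $E(p)$ is zero and the rank drops. Hence we may henceforth assume $p$ is not a common zero of any pair $(s_i,t_i)$; equivalently, for each $i$, the pair $(t_i(p),s_i(p))=(\lambda_i,\mu_i)$ lies in $\C^2\setminus\{(0,0)\}$, so $g_i:=f^i_{t_i(p),s_i(p)}$ is a legitimate polynomial of the form considered in Notation~\ref{Ppnot}. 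Now the key computational step is to relate the $i$th row of $E(p)$ to $\nabla g_i(p)$. Using \eqref{t2nabla} with $(\lambda_i,\mu_i)=(t_i(p),s_i(p))$ and noting $g_i(p)=\lambda_i s_i(p)-\mu_i t_i(p)=0$, one gets
\[
t_i^2\nabla\tfrac{s_i}{t_i}\Big|_p=\begin{cases}t_i(p)^{-1}\,t_i(p)\,\nabla g_i(p)=\nabla g_i(p)&\text{if }\lambda_i=t_i(p)\neq0,\\[1mm] s_i(p)^{-1}\,s_i(p)\,\nabla g_i(p)=\nabla g_i(p)&\text{if }\mu_i=s_i(p)\neq0,\end{cases}
\]
so in all cases the $i$th row of $E(p)$ is exactly $\nabla g_i(p)$. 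Therefore $M_p$ is Poisson if and only if the vectors $\nabla g_1(p),\dots,\nabla g_{n-2}(p)$ are linearly dependent over $\C$.

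It remains to show that $\nabla g_1(p),\dots,\nabla g_{n-2}(p)$ are linearly dependent exactly when (2) or (3) holds. If $g_1,\dots,g_{n-2}$ are algebraically dependent over $\C$, then by the standard Jacobian criterion their Jacobian matrix $(\partial_j g_i)$ is everywhere of rank $<n-2$ (the gradients are dependent over the function field, hence at every point, since $\C$ has characteristic $0$), so in particular the rows $\nabla g_i(p)$ are dependent. Conversely, suppose $g_1,\dots,g_{n-2}$ are algebraically independent. Note that each $g_i=t_i(p)s_i-s_i(p)t_i$ vanishes at $p$, so $p$ lies on the affine variety $Z$ cut out by $g_1,\dots,g_{n-2}$; by independence this variety has a component through $p$ of dimension $n-(n-2)=2$ (i.e. codimension $n-2$), and by definition $p$ is a singular point of $Z$ precisely when the rank of the Jacobian $(\partial_j g_i)(p)$ is strictly less than $n-2$, which is exactly the dependence of the gradients $\nabla g_i(p)$. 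Thus dependence of the gradients is equivalent to ``(2) or (3)'', completing the equivalence. The main obstacle is the bookkeeping in this last paragraph: one must be a little careful about what ``singular point of the affine variety determined by $g_1,\dots,g_{n-2}$'' means when the $g_i$ need not generate a radical or prime ideal, and about invoking the Jacobian criterion over a possibly reducible variety; the cleanest route is to phrase everything directly in terms of $\operatorname{rank}\bigl((\partial_j g_i)(p)\bigr)<n-2$, which is how both (2) and (3) will be used in the sequel anyway, and to remark that when the $g_i$ are algebraically independent this rank condition is precisely the singularity of $Z$ at $p$ in the usual scheme-theoretic sense.
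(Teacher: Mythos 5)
Your proposal is correct and takes essentially the same route as the paper's proof: dispose of case (1) by noting the $i$th row of $E(p)$ vanishes, use \eqref{t2nabla} to identify the rows of $E(p)$ with $\nabla g_i(p)$, handle algebraic dependence by the vanishing of the Jacobian minors (the paper invokes its Theorem~\ref{algdep} where you cite the standard characteristic-zero Jacobian criterion, which is the same fact), and reduce case (3) to the rank condition via the general Jacobian criterion. Your closing caveat about non-radical ideals matches the paper's own remark citing \cite[Corollary 16.20]{eisenbud}.
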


\begin{proof}
It suffices to show that $M_p$ is
a Poisson ideal if (1) holds or if (1) fails and (2) holds  and that if (1) and (2) fail then $M_p$ is
a Poisson ideal if and only if (3) holds.

Suppose that (1) holds. Thus $s_i(p)=t_i(p)=0$ for some $i=1,\dots, n-2$ and
\[\left(t_i^2\nabla\frac{s_i}{t_i}\right)(p)=(t_i\nabla s_i-s_i\nabla t_i)(p)=0.\]
Let $1\leq k,\ell\leq n$. Then
\[\{x_k,x_\ell\}(p)=\left|\begin{matrix}
e_k\\e_\ell\\ \left(t_1^2\nabla\frac{s_1}{t_1}\right)(p)\\ \vdots \\ \left(t_{n-2}^2\nabla\frac{s_{n-2}}{t_{n-2}}\right)(p)
\end{matrix}\right|=0\]
so $\{x_k,x_\ell\}\in M_p$ which is therefore a Poisson ideal.

We can now assume that (1) fails.
Let $i=1,\dots, n-2$ and set
$\lambda_i=t_i(p)$ and $\mu_i=s_i(p)$.
Thus
$(\lambda_i,\mu_i)=(t_i(p),s_i(p))\neq (0,0)$
and $g_i(p)=0$. By \eqref{t2nabla},
\[\left(t_i^2\nabla\frac{s_i}{t_i}\right)(p)=\left(\nabla g_i\right)(p).\]

Now suppose that (1) fails but (2) holds.
Thus $(\lambda_i,\mu_i)\neq(0,0)$ for $1\leq i\leq n-2$ and $g_1,\dots,g_{n-2}$
are algebraically dependent over $\C$. For  $1\leq k,\ell\leq n-2$,
\[\{x_k,x_\ell\}(p)=\left|\begin{matrix}
e_k\\e_\ell\\ \left(t_1^2\nabla\frac{s_1}{t_1}\right)(p)\\ \vdots \\ \left(t_{n-2}^2\nabla\frac{s_{n-2}}{t_{n-2}}\right)(p)
\end{matrix}\right|=
\left|\begin{matrix}
e_k\\e_\ell\\ \left(\nabla g_1\right)(p)\\ \vdots \\ \left(\nabla g_{n-2}\right)(p)
\end{matrix}\right|=0\]
 by Theorem~\ref{algdep}, applied with the algebraic dependent elements $g_1,\dots,g_{n-2}$ in place of $f_1,\dots,f_{n-2}$.  Thus $M_p$ is a Poisson ideal.

Finally, suppose that (1) and (2) fail. As (1) fails, $(\lambda_i,\mu_i)\neq (0,0)$ for $1\leq i\leq n-2$. As (2) fails, $g_{1},\dots, g_{n-2}$ are algebraically independent over $\C$ and so
 the dimension of the  affine variety
$\mathcal{V}(g_{1},\dots, g_{n-2})$ that they determine
is two. Then $M_p$ is a Poisson ideal of $A$ if and only if, for all $k,\ell$, 
$$0=\{x_k,x_\ell\}(p)=\left|\begin{matrix}
e_k\\e_\ell\\ \left(t_1^2\nabla\frac{s_1}{t_1}\right)(p)\\ \vdots \\ \left(t_{n-2}^2\nabla\frac{s_{n-2}}{t_{n-2}}\right)(p)
\end{matrix}\right|=
\left|\begin{matrix}
e_k\\e_\ell\\ \left(\nabla g_1\right)(p)\\ \vdots \\ \left(\nabla g_{n-2}\right)(p)
\end{matrix}\right|$$
if and only if  the  determinants of all $(n-2)\times(n-2)$-submatrices of
$\begin{pmatrix}
\nabla g_1\\ \vdots \\ \nabla g_{n-2}
\end{pmatrix}$ vanish at  $p$ if and only if
 $p$ is a singular point of
$\mathcal{V}(g_1,\dots,g_{n-2})$.
Note that here we are using the Jacobian Criterion in a more general form, for example \cite[Corollary 16.20]{eisenbud}, than a form which applies to generators of a prime or reduced ideal.

This completes the proof.
\end{proof}

\begin{lem}\label{fA2}
Suppose that the parameters $\lambda_i$ and $\mu_i$ are such that  the ideal $I:=f^{1}_{\lambda_1,\mu_1}A+\dots+f^{n-2}_{\lambda_{n-2},\mu_{n-2}}A$ is a proper ideal of $A$ and let $P$
be a minimal prime ideal of $I$.
\begin{enumerate}
\item $P$ is a Poisson prime ideal with height less than or equal to $n-2$.
\item If $P$ is residually null then it is not Poisson primitive.
\item If $P$ is proper then $\text{ht}P=n-2$ and $P$ is locally closed and Poisson primitive.
\end{enumerate}
\end{lem}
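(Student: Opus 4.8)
The plan is to prove the three parts more or less in the order (1), (2), (3), using the density of $\gamma$ and the role of $C_\gamma$ established in Theorem~\ref{AC}. For part~(1), I would first rewrite each generator $f^i_{\lambda_i,\mu_i}$ in terms of $v_i$ and the elements of $S_\gamma$. Recall that for the dense sequence $\gamma$ attached to a minimal prime $P$ of $I$ (or rather, the relevant $\gamma$ here is forced once we know which of $s_i, t_i$ lie in $P$), either $v_i = s_i/t_i$ or $v_i = t_i/s_i$; in either case $f^i_{\lambda_i,\mu_i} = \lambda_i s_i - \mu_i t_i$ is, up to a unit of $A_\gamma$ coming from $S_\gamma$, a polynomial in $v_i$ of degree $1$, namely $\lambda_i t_i(v_i - \mu_i/\lambda_i)$ or $-\mu_i s_i(v_i - \lambda_i/\mu_i)$ depending on which of $\lambda_i, \mu_i$ is nonzero. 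Hence $I A_\gamma = J A_\gamma$ for an ideal $J$ of $C_\gamma$ generated by $n-2$ degree-one polynomials in the $v_i$, and Theorem~\ref{AC}(1) shows that every prime of $A_\gamma$ minimal over $JA_\gamma$ is Poisson. Passing between minimal primes of $I$ in $A$ and minimal primes of $IA_\gamma$ in $A_\gamma$ via the standard localization correspondence, $P$ is Poisson prime. The height bound $\hgt P \le n-2$ is just Krull's height theorem, since $I$ is generated by $n-2$ elements.

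For part~(2): if $P$ is residually null, I want to produce a proper Poisson prime ideal strictly between two copies of $P$ — no, more precisely, I invoke Lemma~\ref{poissonprimitive}(1), which says a residually null Poisson primitive ideal is Poisson maximal. So it suffices to show $P$ is \emph{not} maximal, i.e.\ $\hgt P < n$; but that is immediate from part~(1), since $n \ge 3$ forces $n-2 < n$. Thus $P$ residually null cannot be Poisson primitive. (If the paper instead wants a direct argument, one observes that a residually null $P$ of height $\le n-2 < n$ is properly contained in some maximal ideal $M$, and $\mathcal P(M) \supseteq P$ would have to equal $P$ for $P$ to be primitive, contradicting maximality of the Poisson core being forced down to a non-maximal prime — but the Lemma route is cleaner.)

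For part~(3): suppose $P$ is proper. The key point is to upgrade the inequality $\hgt P \le n-2$ from part~(1) to equality. Here I would use Theorem~\ref{AC}(2): with $\gamma = \gamma(P)$ dense, $\hgt P = \hgt(PA_\gamma) = \hgt(PA_\gamma \cap C_\gamma)$, and $PA_\gamma \cap C_\gamma$ contains the ideal $JA_\gamma \cap C_\gamma \supseteq J$ generated by the $n-2$ degree-one polynomials $v_i - (\mu_i/\lambda_i)$ (or the reciprocal form). These $n-2$ linear generators in the polynomial ring $C_\gamma = \C[v_1,\dots,v_{n-2}]$ cut out a single point, hence generate a maximal ideal of height exactly $n-2$; since $PA_\gamma \cap C_\gamma$ is a prime ideal containing this maximal ideal, it \emph{equals} it, so $\hgt(PA_\gamma \cap C_\gamma) = n-2$ and therefore $\hgt P = n-2$. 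Then Lemma~\ref{poissonprimitive}(3) applies directly: a proper Poisson prime ideal of the polynomial algebra $A$ of height $\ge n-2$ is locally closed and Poisson primitive.

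The main obstacle I anticipate is part~(3), specifically the bookkeeping needed to confirm that $PA_\gamma \cap C_\gamma$ really does contain the maximal ideal $(v_1 - c_1, \dots, v_{n-2} - c_{n-2})$ of $C_\gamma$ and not just some proper subideal — this requires being careful about the units in $S_\gamma$ (which are nonzero elements of the domain $A$, hence units in $A_\gamma$) when clearing denominators in $f^i_{\lambda_i,\mu_i} = (\text{unit}) \cdot (v_i - c_i)$, and about the case split according to whether $\lambda_i$ or $\mu_i$ vanishes, which is exactly where the two forms of $v_i$ in the definition of $V_\gamma$ are designed to cooperate. Everything else is a routine application of the already-established Theorem~\ref{AC} and Lemma~\ref{poissonprimitive}.
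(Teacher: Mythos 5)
Your treatment of parts (2) and (3) is correct and coincides with the paper's own proof. For (2), the paper likewise combines the height bound $\hgt P\leq n-2<n$ from (1) with Lemma~\ref{poissonprimitive}(1). For (3), the paper performs exactly the computation you outline: it shows that $PA_\gamma\cap C_\gamma$ contains, and hence equals, the maximal ideal of $C_\gamma$ generated by the elements $v_i-\mu_i/\lambda_i$ (when $t_i\notin P$) or $v_i-\lambda_i/\mu_i$ (when $t_i\in P$), with the same observation you flag that the relevant scalar $\lambda_i$ or $\mu_i$ cannot vanish because otherwise $f^i_{\lambda_i,\mu_i}$ would be a nonzero multiple of $t_i$ or $s_i$ lying in $P$; it then concludes $\hgt P=n-2$ from Theorem~\ref{AC}(2) and finishes with Lemma~\ref{poissonprimitive}(3).

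Part (1), however, has a genuine gap. Your route through $C_{\gamma(P)}$ and Theorem~\ref{AC}(1) presupposes that $\gamma(P)$ is dense, but Lemma~\ref{restrictions}(1) guarantees density only when $P$ is \emph{proper}, whereas in (1) the ideal $P$ is an arbitrary minimal prime of $I$ and may well contain both $s_i$ and $t_i$ for some $i$. For instance, with $n=4$, $s_1=x_1$, $t_1=x_2$, $s_2=x_1+x_2^2$, $t_2=1$, $(\lambda_1,\mu_1)=(1,1)$ and $(\lambda_2,\mu_2)=(1,0)$, the ideal $x_1A+x_2A$ is a minimal prime of $I=(x_1-x_2)A+(x_1+x_2^2)A$ and contains both $s_1$ and $t_1$. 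In such a case $v_i\notin A_{\gamma(P)}$, the algebra $C_{\gamma(P)}$ is not even defined, and the rewriting $IA_\gamma=JA_\gamma$ is unavailable, so your argument does not show that $P$ is Poisson. The gap is repairable — by Lemma~\ref{stP} such a $P$ is residually null and hence trivially a Poisson ideal — but your proposal does not make this case split. The paper avoids the issue entirely and more directly: by \eqref{t2nabla}, modulo $I$ each row $t_i^2\nabla(s_i/t_i)$ of the matrix computing $\{f^i_{\lambda_i,\mu_i},g\}$ is a scalar multiple of $\nabla f^i_{\lambda_i,\mu_i}$, so that determinant has two proportional rows modulo $I$ and lies in $I$; thus $I$ itself is a Poisson ideal, and every minimal prime over a Poisson ideal is Poisson by \cite[1.4]{Oh10}. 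That argument is uniform over all minimal primes and needs no localization; the height bound via Krull's generalized principal ideal theorem is then the same in both accounts.
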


\begin{proof}
(1) It follows from  \eqref{t2nabla} that $I$ is a Poisson ideal so $P$ is a Poisson prime ideal. The height of $P$ is less than or equal to $n-2$ by
\cite[15.4]{Sha}.

(2) By Lemma~\ref{poissonprimitive}(1), any residually null Poisson primitive ideal is maximal and hence  has height $n$. By (1), $P$ is not Poisson primitive.

(3) Let $\gamma=\gamma(P)$ which is dense by Lemma~\ref{restrictions}(1). Let $1\leq i\leq n-2$.
Suppose that $t_i\notin P$.  Then $t_i\in S_\gamma$,
$v_i=s_i/t_i$, and $\lambda_iv_i-\mu_i=t_i^{-1}f^{i}_{\lambda_i,\mu_i}\in PA_\gamma$. Note that
$\lambda_i\neq 0$, otherwise $0\neq \mu_i t_i =-f^{i}_{\lambda_i,\mu_i}\in P$.
Similarly if $t_i\in P$ then $s_i\in S_\gamma$, $v_i=t_i/s_i$, $\lambda_i-\mu_iv_i=s_i^{-1}f^{i}_{\lambda_i,\mu_i}\in PA_\gamma$  and $\mu_i\neq 0$.
Therefore $PA_\gamma\cap C_\gamma$ must be the maximal ideal of $C_\gamma$ generated by the elements
$m_i$, where $m_i=v_i-\frac{\mu_i}{\lambda_i}$ if $t_i\notin P$ and $m_i=v_i-\frac{\lambda_i}{\mu_i}$ if $t_i\in P$.  By Theorem~\ref{AC}(2), $n-2=\hgt PA_\gamma=\hgt P$, and hence $P$ is locally closed and Poisson primitive by Lemma~\ref{poissonprimitive}(3).
\end{proof}

We next determine the Poisson primitive ideals of $A$ and establish the Poisson Dixmier-Moeglin equivalence.

\begin{thm}\label{primitiveclass}
The Poisson primitive ideals of $A$ are
the Poisson maximal ideals, as specified in Lemma 3.2, and the proper Poisson ideals that are minimal prime ideals  of a proper ideal
$f^{1}_{\lambda_1,\mu_1}A+\dots+f^{n-2}_{\lambda_{n-2},\mu_{n-2}}A$, as specified in Lemma~\ref{fA2}.
Moreover $A$ satisfies the Poisson Dixmier-Moeglin equivalence.
\end{thm}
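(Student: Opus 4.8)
The plan is to combine the local structure theorem, Theorem~\ref{AC}, with the height dichotomy of Lemma~\ref{poissonprimitive}(3), using the characterisations in Lemmas~\ref{MpPoisson} and~\ref{fA2}. One inclusion is immediate: a Poisson maximal ideal $M$ equals its own Poisson core $\mathcal{P}(M)$, hence is Poisson primitive; and a proper minimal prime of a proper ideal $f^{1}_{\lambda_1,\mu_1}A+\dots+f^{n-2}_{\lambda_{n-2},\mu_{n-2}}A$ is Poisson primitive by Lemma~\ref{fA2}(3). So the substance is to show that these exhaust the Poisson primitive ideals, together with the fact that every rational Poisson prime ideal of $A$ is locally closed; granting the latter, the chain of implications recorded in Definitions~\ref{Pdefns} yields the Poisson Dixmier-Moeglin equivalence.

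First I would take an arbitrary rational Poisson prime ideal $P$ of $A$; since every Poisson primitive ideal is rational, this handles the classification and the equivalence at once. Suppose $P$ is residually null. Then the Poisson bracket on $A/P$, and hence on $Q(A/P)$, is zero, so $\Pz(Q(A/P))=Q(A/P)$; rationality forces $Q(A/P)=\C$, i.e. $A/P=\C$ and $P$ is a maximal ideal. Being a Poisson ideal, $P$ is then a Poisson maximal ideal --- one of the ideals $M_p$ described by Lemma~\ref{MpPoisson}, via the Nullstellensatz --- and, being maximal, it is locally closed. Now suppose $P$ is proper. Set $\gamma=\gamma(P)$, which is dense by Lemma~\ref{restrictions}(1), and put $Q=PA_\gamma\cap C_\gamma$, a prime ideal of the polynomial algebra $C_\gamma=\C[v_1,\dots,v_{n-2}]$. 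Since $C_\gamma$ is contained in the Poisson centre of $A_\gamma$, the integral domain $C_\gamma/Q$ is Poisson central in $A_\gamma/PA_\gamma$, so by the quotient rule its field of fractions embeds in $\Pz(Q(A/P))=\C$; hence $C_\gamma/Q=\C$ and $Q$ is a maximal ideal of $C_\gamma$. By Theorem~\ref{AC}(2) and standard localisation, $\hgt P=\hgt PA_\gamma=\hgt Q=n-2$, so Lemma~\ref{poissonprimitive}(3) applies and $P$ is locally closed and Poisson primitive. In particular every rational Poisson prime is locally closed, which gives the Poisson Dixmier-Moeglin equivalence.

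It remains to exhibit a proper Poisson primitive $P$ on the list. Write $Q=\sum_{i=1}^{n-2}(v_i-c_i)C_\gamma$ with $c_i\in\C$. For each $i$: if $t_i\notin P$ then $v_i=s_i/t_i$, and taking $(\lambda_i,\mu_i)=(1,c_i)$ gives $f^{i}_{\lambda_i,\mu_i}=s_i-c_it_i=t_i(v_i-c_i)$; if $t_i\in P$ then $s_i\notin P$ by Lemma~\ref{stP} and denseness, $v_i=t_i/s_i$, and taking $(\lambda_i,\mu_i)=(c_i,1)$ gives $f^{i}_{\lambda_i,\mu_i}=c_is_i-t_i=-s_i(v_i-c_i)$. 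In either case $(\lambda_i,\mu_i)\neq(0,0)$, the factor multiplying $v_i-c_i$ is a unit of $A_\gamma$, and $f^{i}_{\lambda_i,\mu_i}\in PA_\gamma\cap A=P$. Hence $I:=f^{1}_{\lambda_1,\mu_1}A+\dots+f^{n-2}_{\lambda_{n-2},\mu_{n-2}}A$ is a proper ideal contained in $P$ with $IA_\gamma=QA_\gamma$, and Theorem~\ref{AC}(3) makes $PA_\gamma$ a minimal prime over $QA_\gamma=IA_\gamma$. Since every element of $S_\gamma$ lies outside $P$, any prime $P''$ of $A$ with $I\subseteq P''\subseteq P$ misses $M_\gamma$, so $P''A_\gamma$ is a prime of $A_\gamma$ with $IA_\gamma\subseteq P''A_\gamma\subseteq PA_\gamma$; minimality forces $P''A_\gamma=PA_\gamma$, hence $P''=P$, so $P$ is a minimal prime over $I$. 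Conversely, a residually null Poisson primitive ideal is a Poisson maximal ideal by Lemma~\ref{poissonprimitive}(1), which completes the classification.

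I expect the main obstacle to lie in the proper case of the second step: identifying $Q=PA_\gamma\cap C_\gamma$ as a \emph{maximal} ideal of $C_\gamma$ through the Poisson-centrality argument --- this is exactly what converts ``rational'' into the height equality $\hgt P=n-2$ and brings Lemma~\ref{poissonprimitive}(3) into play --- together with the localisation bookkeeping needed to match $IA_\gamma$ with $QA_\gamma$ so that minimality of $PA_\gamma$ over $QA_\gamma$ descends to minimality of $P$ over $I$. By contrast the residually null case and the converse inclusion are essentially formal.
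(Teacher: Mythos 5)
Your proposal is correct, and it reaches the same classification, but the engine driving the proper case is genuinely different from the paper's. The paper starts from a Poisson primitive (or rational) $P$ and invokes \cite[1.10]{Oh4} directly: since each $s_i/t_i$ is Poisson central in $Q(A)$, there exist $(\lambda_i,\mu_i)\neq(0,0)$ with $f^i_{\lambda_i,\mu_i}\in P$; it then bounds $\hgt P\leq n-2$ via \cite[Proposition 3.2]{JoOh}, takes a minimal prime $Q$ of $I=\sum f^i_{\lambda_i,\mu_i}A$ inside $P$, and forces $P=Q$ because Lemma~\ref{fA2} gives $\hgt Q=n-2$. You instead localize at $\gamma(P)$, use the Poisson centrality of $C_\gamma$ in $A_\gamma$ together with rationality to show that $PA_\gamma\cap C_\gamma$ is a \emph{maximal} ideal of the polynomial ring $C_\gamma$, read off $\hgt P=n-2$ from Theorem~\ref{AC}(2), and then manufacture the $f^i_{\lambda_i,\mu_i}$ from the generators $v_i-c_i$ (clearing the unit denominators $t_i$ or $s_i$), with minimality of $P$ over $I$ descending from Theorem~\ref{AC}(3) by standard localization. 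Your route is more self-contained --- it re-derives within this setting what the paper imports from \cite[1.10]{Oh4}, and it makes explicit how the scalars $(\lambda_i,\mu_i)$ arise from the point of $\spec C_\gamma$ below $P$ --- at the cost of some extra localization bookkeeping; the paper's route is shorter but leans on the external citation and on the height bound from \cite{JoOh}. All the steps you use (Theorem~\ref{AC}(2),(3) applied to the nonzero proper Poisson prime $PA_\gamma$, Lemma~\ref{poissonprimitive}(1),(3), Lemma~\ref{fA2}(3), and the Nullstellensatz in $C_\gamma$) are available and correctly applied, so I see no gap.
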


\begin{proof}
 Poisson maximal ideals are always Poisson primitive, so it follows from Lemma~\ref{fA2}(3) that the listed ideals are Poisson primitive.
Let $P$ be any Poisson primitive ideal and let $1\leq i\leq n-2$.
Since $s_i/t_i$ is a Poisson central element of the quotient field $Q(A)$ of $A$, it follows, from \cite[1.10]{Oh4}, that there exists $(\lambda_i, \mu_i)\in\C^2\backslash\{(0,0)\}$ such that $P$ contains
$f_{\lambda_i,\mu_i}^i=\lambda_i s_i-\mu_i t_i$.
 Thus $P$ contains the ideal $f^{1}_{\lambda_1,\mu_1}A+\dots+f^{n-2}_{\lambda_{n-2},\mu_{n-2}}A$.
 If $\text{ht}P\geq n-1$ then $P$ is residually null by \cite[Proposition 3.2]{JoOh}. If $P$ is residually null then, by Lemma~\ref{poissonprimitive}(1), $P$ is a Poisson maximal ideal. Hence we may assume that $\hgt P\leq n-2$ and that $P$ is proper. Let $Q$ be a minimal prime ideal of $f^{1}_{\lambda_1,\mu_1}A+\dots+f^{n-2}_{\lambda_{n-2},\mu_{n-2}}A$
 such that $Q\subseteq P$. By \cite[1.4]{Oh10}, $Q$ is Poisson. If $Q$ is residually null then so is $P$,  a contradiction. Hence $Q$ is proper and $\text{ht}Q=n-2$ by Lemma~\ref{fA2}(2). It follows that $P=Q$ is a
 minimal prime ideal of the ideal  $f^{1}_{\lambda_1,\mu_1}A+\dots+f^{n-2}_{\lambda_{n-2},\mu_{n-2}}A$, as specified in Lemma~\ref{fA2}.

To establish the Poisson Dixmier-Moeglin equivalence, let $P$ be a rational Poisson prime ideal. Let $1\leq i\leq n-2$.
As $s_i/t_i\in \Pz(Q(A))$, $P$ contains
$f^i_{\lambda_i,\mu_i}$ for some $(\lambda_i,\mu_i)\in\C^2\backslash\{(0,0)\}$, and therefore $P$ contains a proper ideal of the form $f^{1}_{\lambda_1,\mu_1}A+\dots+f^{n-2}_{\lambda_{n-2},\mu_{n-2}}A$. If $P$ is residually null then
$\C=\Pz(Q(A/P))=Q(A/P)$ so $P$ is a Poisson
maximal ideal and hence is locally closed. If  $P$ is proper then $\hgt P\leq n-2$ by \cite[Proposition 3.2]{JoOh} and, by Lemma~\ref{fA2}(3), $P$ is locally closed.
\end{proof}

\begin{cor}
Suppose that $t_i=1$ for each $i$ and that $s_1,\dots,s_{n-2}$ are  algebraically independent.
Let $(\mu_1,\dots,\mu_{n-2})\in \C^{n-2}$ be such that $P:=(s_1-\mu_1)A+\dots+(s_{n-2}-\mu_{n-2})A$ is a prime ideal of $A$. Let  $X\subset\C^n$ be the variety determined by $P$. Then $P$ is Poisson prime. Moreover  $X$ is nonsingular if and only if $A/P$ is Poisson simple.
 \end{cor}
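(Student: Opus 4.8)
The plan is to deduce the equivalence from the classification of Poisson primitive ideals in Theorem~\ref{primitiveclass} and the criterion of Lemma~\ref{MpPoisson} for a maximal ideal to be a Poisson ideal; the only non-routine ingredient will be a short argument ruling out one of the two families in Theorem~\ref{primitiveclass}. That $P$ is Poisson prime is immediate: since each $t_i=1$ we have $a=1$ and $f_i=s_i$, so $\{f,g\}=\Jac(f,g,s_1,\dots,s_{n-2})$, and taking $f=s_j$ gives a Jacobian matrix in which the row $\nabla s_j$ is repeated; hence $s_j\in\Pz(A)$ for every $j$. Thus $P$, generated by the Poisson-central elements $s_j-\mu_j$, is a Poisson ideal, and as $A$ is Noetherian and $P$ is prime, $P$ is Poisson prime. (Equivalently one may invoke Lemma~\ref{fA2}(1) with each $\lambda_i=1$, since then $f^{i}_{\lambda_i,\mu_i}=s_i-\mu_i$ and the ideal $I$ there is $P$; this also records $\hgt P\le n-2$.)

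For the equivalence I would first reduce Poisson simplicity of $A/P$ to a condition on maximal ideals of $A$. By Lemma~\ref{poissonprimitive}(2), $A/P$ is Poisson simple if and only if it has no nonzero Poisson primitive ideal; and since $P\subseteq\mathcal P(M)$ for every maximal ideal $M\supseteq P$, the Poisson core in $A/P$ of $M/P$ is $\mathcal P(M)/P$, so the Poisson primitive ideals of $A/P$ are exactly the $P'/P$ with $P'$ a Poisson primitive ideal of $A$ containing $P$. Hence $A/P$ is Poisson simple if and only if no Poisson primitive ideal $P'$ of $A$ satisfies $P\subsetneq P'$. Suppose $P'$ is such an ideal. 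By Theorem~\ref{primitiveclass}, $P'$ is either a Poisson maximal ideal or a proper minimal prime of an ideal $J=f^{1}_{\lambda_1,\nu_1}A+\dots+f^{n-2}_{\lambda_{n-2},\nu_{n-2}}A$. In the second case each $\lambda_i\ne0$ (otherwise $J$ contains the nonzero constant $-\nu_i t_i=-\nu_i$ and equals $A$), so $J=(s_1-\nu_1/\lambda_1)A+\dots+(s_{n-2}-\nu_{n-2}/\lambda_{n-2})A$; then $P'$ contains $s_i-\mu_i$ (from $P'\supseteq P$) and $s_i-\nu_i/\lambda_i$ (from $P'\supseteq J$), hence the scalar $\mu_i-\nu_i/\lambda_i$, which is $0$ since $P'\ne A$; so $J=P$ and $P'$ is a minimal prime of the prime ideal $P$, that is $P'=P$, contradicting $P\subsetneq P'$. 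Therefore $P'$ is a Poisson maximal ideal; writing $P'=M_p$ with $p\in\C^n$, the inclusion $M_p\supseteq P$ means precisely $p\in X$, while $M_p\ne P$ is automatic since $\hgt M_p=n>n-2\ge\hgt P$. Conversely, for any $p\in X$ with $M_p$ a Poisson ideal, $M_p=\mathcal P(M_p)$ is a Poisson primitive ideal strictly containing $P$. So $A/P$ is Poisson simple if and only if no $M_p$ with $p\in X$ is a Poisson ideal.

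It remains to apply Lemma~\ref{MpPoisson} to $M_p$ for $p\in X$. Condition (1) of that lemma is impossible because $t_i=1\ne0$ for every $i$; condition (2) is impossible because $g_i:=f^{i}_{t_i(p),s_i(p)}=s_i-s_i(p)=s_i-\mu_i$, and $s_1-\mu_1,\dots,s_{n-2}-\mu_{n-2}$ are algebraically independent over $\C$, being translates of the algebraically independent elements $s_1,\dots,s_{n-2}$. Hence the lemma gives: $M_p$ is a Poisson ideal exactly when $p$ is a singular point of $\mathcal V(g_1,\dots,g_{n-2})=\mathcal V(P)=X$. Combining this with the previous paragraph, $A/P$ is Poisson simple if and only if $X$ has no singular point, i.e. if and only if $X$ is nonsingular.

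The step I expect to demand the most care is the elimination of the second family in Theorem~\ref{primitiveclass}: one must use that $P$ is prime, that $P'$ contains $P$, and that a proper ideal of the shape $\sum_i f^{i}_{\lambda_i,\nu_i}A$ is generated by translates of the $s_i$, in order to collapse $P'$ onto $P$. Apart from that, the argument is a chaining of results already established, the remaining small point being to check that the first two alternatives of Lemma~\ref{MpPoisson} cannot arise under the present hypotheses.
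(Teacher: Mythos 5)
Your proof is correct and follows essentially the same route as the paper: reduce Poisson simplicity of $A/P$ to the non-existence of Poisson primitive ideals strictly containing $P$, classify those via Theorem~\ref{primitiveclass}, and identify the Poisson maximal ideals containing $P$ with the singular points of $X$ via Lemma~\ref{MpPoisson}. The only (harmless) divergence is that where the paper rules out the second family in Theorem~\ref{primitiveclass} by computing $\hgt P=n-2$ from Lemma~\ref{restrictions}(2) and Theorem~\ref{AC}(2) and comparing heights, you do so by the direct observation that a proper ideal of the form $\sum_i(\lambda_i s_i-\nu_i)A$ containing $P$ must coincide with $P$.
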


\begin{proof}
In Notation~\ref{Ppnot}, let $\lambda_i=1$ so that $f_{\lambda_i,\mu_i}^i=s_i-\mu_i$. By Lemma~\ref{fA2}(1), $P$ is Poisson. By Lemma~\ref{restrictions}(2), $C_{\gamma(P)}=\C[s_1,\dots,s_{n-2}]$ so it follows, by Theorem~\ref{AC}(2), that
  $\hgt P=n-2$. Hence $\dim X=2$. Let $Q$ be a Poisson primitive ideal of $A$ such that $P\subseteq Q$. By Theorem~\ref{primitiveclass} and Lemma~\ref{fA2}, either $\hgt Q=n-2$, in which case $Q=P$, or
$Q$ is the maximal ideal corresponding to a singularity of $X$. Hence $A/P$ has no nonzero Poisson primitive ideal if
and only if $X$ is nonsingular. The result now follows from Lemma~\ref{poissonprimitive}(2).
\end{proof}

\bibliographystyle{amsplain}


\end{document}